\newcommand{\Q}{\mathbb{Q}}
\newcommand{\N}{\mathbb{N}}
\newcommand{\R}{\mathbb{R}}
\newcommand{\PP}{\mathbb{P}}
\newcommand{\VA}{V^{(\A)}}
\newcommand{\VF}{V^{(\mathcal{F})}}
\newcommand{\F}{\mathcal{F}}
\newcommand{\A}{\mathcal{A}}
\DeclareMathOperator*{\esssup}{ess.sup\,}
\DeclareMathOperator{\dom}{dom}
\DeclareMathOperator{\Ord}{Ord}
\newtheorem{defn}{Definition}[section]
\newtheorem{rem}{Remark}[section]
\newtheorem{prop}{Proposition}[section]
\newtheorem{thm}{Theorem}[section]
\providecommand{\keywords}[1]{\textbf{\textit{Keywords:}} #1}
\begin{document}
\title{Boolean-valued models as a foundation for locally $L^0$-convex analysis and Conditional set theory\thanks{2010 Mathematics Subject Classification: 03C90, 46H25, 91B30, 62P05.}\thanks{The authors would like to thank an anonymous referee for a careful review of the manuscript and valuable comments, and for pointing out a stem of excellent related works.}}
\author{A. Avilés \thanks{Universidad de Murcia, Dpto. Matemáticas, 30100 Espinardo, Murcia, Spain, e-mail: avileslo@um.es. }, J.M. Zapata \thanks{Universidad de Murcia, Dpto. Matemáticas, 30100 Espinardo, Murcia, Spain, e-mail: jmzg1@um.es. } \thanks{Second author was partially supported by the grant MINECO MTM2014-57838-C2-1-P}}
\date{}
\maketitle

\begin{abstract}
Locally $L^0$-convex modules were introduced in [D. Filipovic, M. Kupper, N. Vogelpoth. Separation and duality in locally $L^0$-convex modules. J. Funct. Anal. 256(12), 3996-4029 (2009)] as the analytic basis for the study of  multi-period mathematical finance. Later, the algebra of conditional sets was introduced in [S. Drapeau, A. Jamneshan, M. Karliczek, M. Kupper. The algebra of conditional sets and the concepts of conditional topology and compactness. J. Math. Anal. Appl. 437(1), 561-589 (2016)]. 
By means of Boolean-valued models and its transfer principle we show that any known result on locally convex spaces has a transcription in the frame of locally $L^0$-convex modules which is also true, and that the formulation in conditional set theory of any theorem of classical set theory is also a theorem.   
We propose Boolean-valued analysis as an analytic framework for the study of multi-period problems in mathematical finance. 

\keywords{Boolean-valued models; locally $L^0$-convex modules; conditional sets;  Transfer principle.}



\end{abstract}

\section*{Introduction}

Boolean-valued models are a tool in mathematical logic that was developed as a way to formalize the method of forcing that Paul Cohen created to solve the first problem in the famous Hilbert's list: it is impossible neither to prove nor to disprove that every infinite set of reals can be bijected either with the natural numbers of with the whole real line  \cite{cohen1966set}. The theory was first formulated by Scott \cite{scott1967proof} based on some ideas of Solovay, while Vop\v{e}nka created independently a similar theory. In this paper we shall see that Boolean-valued models provide a natural framework for certain problems in financial mathematics which involve a multi-period setting, such as representation of dynamic and conditional risk measures and stochastic optimal control. 
Several recent developments, like the study of \emph{locally $L^0$-convex modules}  and the \emph{algebra of conditional sets} are covered by this theory. 
The advantage is not only a unified approach for several scattered results in the literature; the important point is that we get at our disposal all the powerful tools of a well developed deep mathematical theory. 
In particular, the so-called \emph{transfer principle} claims that any known result of set theory has a transcription in the Boolean-valued setting, which is also true.  

In order to provide an analytical basis to problems of mathematical finance in a multi-period set-up with a dynamic flow of information, Filipovic et al.~\cite{kupper03} considered $L^0:=L^0(\Omega,\Sigma,\PP)$, the ordered lattice ring of equivalence classes modulo almost sure equality of $\Sigma$-measurable random variables, where $(\Omega,\Sigma,\PP)$ is a probability space that models the market information that is available at some future time. 
They introduced the topology of almost sure dominance on $L^0$ and the notion of locally $L^0$-convex module, and succeeded in  developing a randomized version of classical convex analysis. 
We will show that a locally $L^0$-convex module can be embedded into a Boolean-valued universe and will systematically study the  meaning of several related objects within this framework. 
Thus, we will show that not only the main results of \cite{kupper03} are consequence of this connection but that any known  result of locally convex analysis has a modular transcription which also holds as a consequence of the transfer principle of Boolean-valued models. 
For instance, we provide randomized versions of celebrated theorems such as the Brouwer's fixed point and the James' compactness theorem. 

 
Conditional set theory was introduced in \cite{DJKK13}. The authors need to give a definition of the conditional version of each mathematical concept they want to use (conditional real number, conditional topological space, etc.)  and also formulate and prove the conditional version of known results. 
All this is automatic within the context that we propose: the conditional version of any known result is automatically true. 
Nevertheless, we should mention that this connection exhibits that conditional set theory provides a practical tool to manage objects from Boolean-valued models and gives intuition to anyone who is not familiarized with the formalisms of Boolean-valued  models.

These developments have been applied to mathematical finance. 
For instance, we find applications to representation of conditional risk measures (see eg~\cite{bielecki2016dynamic,kupper11,frittelli2011dual}), to equilibrium theory (see \cite{BH14,kupper08}), to optimal  stochastic control (see \cite{JKZ2017control}) and to financial preferences (see \cite{DJ13}).  
Also, as commented, there is a significant number of related works. 
For instance, modules endowed with $L^0$-valued norms have been applied to the study of ultrapowers of Lebesgue-Bochner spaces by Haydon et al.~\cite{haydon1991randomly}. 
 Guo together with other co-authors have widely studied generalizations of functional analysis results in $L^0$-modules endowed with the topology of stochastic convergence with respect to a family of $L^0$-valued seminorms, also called the $(\epsilon,\lambda)$-topology (see eg~\cite{guo08,guo2013homo,guo2009random} and references therein). 
In this regard, a study of the relations between the $(\epsilon,\lambda)$-topology and the locally $L^0$-convex topology induced by $L^0$-valued seminorms can be found in \cite{guo10}. 
Eisele and Taieb \cite{eisele13} extended some functional analysis results to modules over the ring $L^\infty$. 
A randomized version of finite-dimensional analysis in $(L^0)^d$ together with many results are provided in \cite{Cheridito2012} and also  a version of the Brouwer fixed point theorem in this context is  established in \cite{DKKM13}. 
Other results and counter-examples on locally $L^0$-convex modules can be found in \cite{zapata2017characterization,zapataportfolio}. 
Further studies of dual pairs and weak topologies in the context of conditional sets are provided in \cite{OZ2017stabil,zapataweak}. 
  
In addition, we should highlight that the Boolean-valued model approach shows that the study of locally $L^0$-convex modules naturally fits the framework of the well-developed theory of lattice-normed spaces (i.e. norms that take values in a vector lattice) and dominated operators, originated in the 1930s by L.V. Kantorovich (see~\cite{Kantorovich}), field that has been widely researched and fruitfully exploited by A. G. Kusraev and S. S. Kutateladze.  
For a thorough account we refer the reader to eg~\cite{kusraev2000dominated,kusraev2012boolean} and their extensive list of references.

The paper is organized as follows: In the first section we give a short introduction to Boolean-valued models, provide some intuition and recall the basic elements and principles of the Boolean-valued machinery. In Section 2, we explain a precise connection between the framework of locally $L^0$-convex modules and Boolean-valued locally convex analysis; give a list of basic elements of locally $L^0$-convex analysis and explain their meanings within Boolean-valued locally convex analysis; and as example of application we derive the main theorems of \cite{kupper03} and modular versions of James' compactness theorem and Brouwer's fixed point theorem. Finally, Section 3 is devoted to provide a precise connection between conditional set theory and Boolean-valued models.  \\

\section{Foundations of Boolean-valued models}

Let us try to give an intuitive idea of what Boolean-valued analysis is and how it can fit in mathematical finance. We would like to talk about what will happen in a particular moment in the future. This future is uncertain, it is influenced by events that we do not know yet. These possible events that might influence the future will be coded by a complete Booolean algebra $\A=(\A,\vee,\wedge,{}^c,0,1)$. The simplest case that we can think of is that the future that we are interested in is completely determined by the result of flipping a coin. In that case, the algebra of events is $\mathcal{A}_0= \{a,a^c,0,1\}$ where $a$ is the event ``we get head'', and its negation $a^c$ is ``we get tail''. In the algebra of events we also have all the events that we can formulate combining others, and so $a\vee a^c = 1$ is the event ``we get either head or tail'', which is just the true event, and $a\wedge a^c = 0$ is the event ``we get both head and tail'' which is just the false event. Another example is that our future depends on a randomly chosen (say with Gaussian probability) real number. In that case, the algebra of events would be the measure algebra: measurable subsets of $\mathbb{R}$ modulo null sets. In that case, for example, the class of $[1,2]$ is interpreted as the event ``the random number happened to fall inside the interval $[1,2]$''.\\

So let us fix the algebra $\mathcal{A}$ of all the events that we can talk about and influence the future. The next element of our theory are the \emph{names}. The names are the nouns of the language with which we talk about objects in the future despite the uncertainties. In the flipping coin example, suppose that I have five dollars and I bet two dollars that the outcome will be tail. Then I can consider the name $\dot{x}$ that represents the amount of money that I will have in the future. The actual value of $\dot{x}$ is unknown, it could be 3 or 7 depending on the coin. In the very simple flipping coin case, a name can be identified with a pair $(r,s)$ of mathematical objects, one for head and one for tail. In the random real case, names will look more complicated but the idea is similar. Examples of names would be $\dot{y}$ that would take value 1 if the random real is positive or -1 if it is negative, and also $\dot{z}$ the name for the random real itself. 
A special kind of names are those which do not really depend on the unknown events, and those are represented with a $\vee$ symbol above. For example, $\check{5}$ is a name which represents the number 5, no matter what the coin did or what the random real actually happens to be.\\

Once we understand the idea of a name, the next step is formulating statements about names and deciding what are the truth values of such statements. Playing with the names given above in the flipping coin case, it make sense to make the following statements: P1. $\dot{x}$ is a positive real number, P2. $\dot{x} = \check{7}$, P3. $\dot{x} < \check{4}$, P4. $\dot{x} = \dot{x}^2$. While $P1$ is clearly true and $P4$ is clearly false, for P2 and P3 we may say that \emph{it depends on what the coin will do}. In Boolean-valued analysis, these statements are not assigned a binary truth value of true or false. The truth valued of a sentence $P$, denoted by $\llbracket P\rrbracket$ is an element of the Boolean algebra $\mathcal{A}$ that corresponds to the event that describes when this sentence is true. Thus $\llbracket P3\rrbracket = a^c$ because I have 8 dollars if and only if the flipping will give a tail, and similarly $\llbracket P4\rrbracket = a$. In the random case, for instance, the truth value of the sentence $\check{2}\dot{z}<\check{4}$ is exactly the representative of the interval $(-\infty,2)$ of the measure algebra, while $\llbracket \dot{z}>\dot{y}\rrbracket$ is the representative of $(-1,0)\cup (1,+\infty)$. In all these examples, we are using names for real numbers, but the idea is more general, we can have names for functions, sets, Banach spaces or any mathematical object we want, and state any kind of properties we wish in formal mathematical language. In the framework of set theory, any mathematical object can be considered as a set and any mathematical statement can be re-stated in terms of the belonging relation $\in$ between sets.\\

The precise formulation of Boolean-valued analysis requires some familiarity with the basics of set theory and logic, and in particular with first order logic, ordinals and transfinite induction. However, if one understands the key ideas and principles, it is possible to work with Boolean-valued models avoiding the underlying machinery that can be conveniently hidden in a black box. For a detailed description we can refer the reader to \cite{bell2005set}, \cite[Chapter 14]{jech2013set}, or \cite[Chapter 2]{kusraev2012boolean}. We make now a quick review.\\

Let us consider a universe of sets $V$ satisfying the axioms of the Zermelo-Fraenkel set theory with the axiom of choice (ZFC), and  a first-order language $\mathcal{L}$ which allows the formulation of statements about the elements of $V$. In the universe $V$ we have all possible mathematical objects (real numbers, topological spaces, etc.) that we can talk about in a context of total certainty. The language $\mathcal{L}$ consists of the elements of $V$ plus a finite list of symbols for logic symbols ($\forall$, $\wedge$, $\neg$ and parenthesis), variables (with the symbol $x$ we can express any variables we need as $x,xx,xxx,\ldots$) and the verbs  $=$ and $\in$.  Though we usually use a much richer language by introducing more and more intricate  definitions, in the end any usual mathematical statement can be written using only those mentioned. The elements of the universe $V$ are classified into a transfinite hierarchy: $V_0\subset V_1\subset V_2 \subset \cdots V_\omega \subset V_{\omega+1}\subset \cdots$, where $V_0 = \emptyset$, $V_{\alpha+1} = \mathcal{P}(V_\alpha)$ is the family of all sets whose elements come from $V_\alpha$, and $V_\beta = \bigcup_{\alpha<\beta}V_\alpha$ for limit ordinal $\beta$.\\

Now consider the complete Boolean algebra of events $\A=(\A,\vee,\wedge,{}^c,0,1)$  which is an element of $V$.
For given $a,b\in\A$, we will write $a\leq b$ whenever $a\wedge b= a$. For a family $\{a_i\}_{i\in I}$ in $\A$, we denote its supremum by $\bigvee_{i\in I} a_i$ and its infimum by $\bigwedge_{i\in I} a_i$. A family $\{a_i\}_{i\in I}$ in $\A$ is said to be a partition of $a$ if $\bigvee_{i\in I} a_i=a$ and $a_i\wedge a_j=0$ for all $i\neq j$, $i,j\in I$ (notice that $I$ could be infinite in this definition). For given $a\in\A$ we denote by $p(a)$ the set of all partitions of $a$.\\

 Given this complete Boolean algebra $\A$, one constructs now $\VA$, the \emph{Boolean-valued model} of $\A$, whose elements are the \emph{names} that we mentioned earlier, that we interpret as nouns with which we talk about the future.
 We proceed by induction over the class $\Ord$ of ordinals of the universe $V$. We start by defining $\VA_0:=\emptyset$; 
 if $\alpha+1$ is the successor of $\alpha$, we define
 $$\VA_{\alpha+1}:=\left\{ x \colon x\textnormal{ is an $\A$-valued function with }\dom(x)\subset\VA_\alpha  \right\}.$$
 The idea is that for $y\in dom(x)$, $y$ will become an element of $x$ in the future if $x(y)$ happens.
 If  $\alpha$ is a limit ordinal  $V_\alpha^{(\A)}:=\underset{\xi<\alpha}\bigcup V_{\xi}^{(\A)}$. Finally, let $V^{(\A)}:=\underset{\alpha\in Ord}\bigcup V_{\alpha}^{(\A)}.$
 
 Given an element $x$ in $\VA$ we define its \emph{rank} as the least ordinal $\alpha$ such that $x$ is in $\VA_{\alpha+1}$.
 
 We consider a first-order language which allows to produce statements about $\VA$.
 Namely, let $\mathcal{L}^{(\A)}$ be the first-order language which is the extension of $\mathcal{L}$ by adding names for each element of $\VA$. Suppose that $\varphi$ is any formula of the language $\mathcal{L}^{(\A)}$, its \emph{Boolean truth value} $\llbracket\varphi\rrbracket$  is defined by induction in the length of $\varphi$. If one got the right intuition, all the formulas that follow should look natural. We start by defining the Boolean truth value of the \emph{atomic formulas} $x\in y$ and $x=y$ for $x$ and $y$ in $\VA$. Namely, proceeding by transfinite recursion we define
 \[
 \llbracket x\in y\rrbracket=\underset{t\in\dom(y)} \bigvee y(t)\wedge\llbracket t=x\rrbracket,
 \]
 \[
 \llbracket x=y\rrbracket=\underset{t\in\dom(x)}\bigwedge \left(x(t)\Rightarrow \llbracket t\in y\rrbracket\right) \wedge \underset{t\in\dom(y)}\bigwedge \left(y(t)\Rightarrow \llbracket t\in x\rrbracket\right),
 \]

where, for $a,b\in\A$, we denote $a\Rightarrow b:=a^c\vee b$.
 For non-atomic formulas we have 
 \[
 \llbracket \exists x\varphi(x)\rrbracket:=\underset{u\in \VA}\bigvee \llbracket \varphi(u)\rrbracket\quad\textnormal{ and }\quad \llbracket \forall x\varphi(x)\rrbracket:=\underset{u\in \VA}\bigwedge \llbracket \varphi(u)\rrbracket;
 \]
\[
\llbracket \varphi \wedge \psi\rrbracket:=\llbracket \varphi\rrbracket \wedge \llbracket\psi\rrbracket \quad\textnormal{ and }\quad \llbracket \neg\varphi\rrbracket:=\llbracket\varphi\rrbracket^c.
\]
It is well-known that every theorem of ZFC is true in $\VA$ with the Boolean truth value:
\begin{thm}(Transfer Principle)
If $\varphi$ is a theorem of ZFC, then $\llbracket\varphi\rrbracket=1$.
\end{thm}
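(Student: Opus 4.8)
The plan is to prove the Transfer Principle by the standard two-step route. First I would show that \emph{every axiom of} $\mathrm{ZFC}$ \emph{receives Boolean truth value} $1$ \emph{in} $\VA$. Second, I would show that the Boolean truth value calculus is sound for first-order logic: every logical axiom has value $1$, and the inference rules (modus ponens and universal generalization) preserve value $1$. Granting these two facts, an induction on the length of a formal derivation shows that every line of a proof of a theorem $\varphi$ of $\mathrm{ZFC}$ has Boolean value $1$, hence $\llbracket\varphi\rrbracket=1$.

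Before either step I would set up the basic calculus of $\llbracket\cdot\rrbracket$. By transfinite induction on the ranks of the names one proves that $\llbracket x=x\rrbracket=1$, that $\llbracket x=y\rrbracket=\llbracket y=x\rrbracket$, that $\llbracket x=y\rrbracket\wedge\llbracket y=z\rrbracket\le\llbracket x=z\rrbracket$, and the compatibility laws $\llbracket x=y\rrbracket\wedge\llbracket y\in z\rrbracket\le\llbracket x\in z\rrbracket$ and $\llbracket x\in y\rrbracket\wedge\llbracket y=z\rrbracket\le\llbracket x\in z\rrbracket$; a further induction on the length of $\varphi$ then yields the substitution property $\llbracket x=y\rrbracket\wedge\llbracket\varphi(x)\rrbracket\le\llbracket\varphi(y)\rrbracket$, so $=$ is a genuine congruence for the interpreted language. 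I would also establish the \emph{Mixing Principle} — given a partition $\{a_i\}_{i\in I}$ of $1$ and names $\{x_i\}_{i\in I}$, there is a name $x$ with $a_i\le\llbracket x=x_i\rrbracket$ for every $i$, built by gluing the $x_i$ along the $a_i$ (this is exactly where completeness of $\A$ is used) — and from it the \emph{Maximum Principle}: for every $\varphi(x)$ there is $u\in\VA$ with $\llbracket\exists x\,\varphi(x)\rrbracket=\llbracket\varphi(u)\rrbracket$.

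With this machinery in hand the logical step is routine: each logical axiom scheme gets value $1$ from the congruence property of $=$ and the distributive/De Morgan laws of $\A$ (noting $\llbracket\forall x\,\varphi(x)\rrbracket$ is an infimum), and modus ponens is preserved because $\llbracket\varphi\rrbracket\wedge\llbracket\varphi\Rightarrow\psi\rrbracket\le\llbracket\psi\rrbracket$. The bulk of the work is checking the $\mathrm{ZFC}$ axioms. Extensionality is essentially the definition of $\llbracket x=y\rrbracket$; pairing, union and infinity are witnessed by explicit names (for infinity, the canonical name $\check{\omega}$ coming from the embedding $x\mapsto\check{x}$); power set is witnessed by the name whose domain is the set of all $\A$-valued functions on $\dom(u)$; foundation follows by $\in$-induction on ranks of names; separation and replacement/collection use the Maximum Principle together with a rank-bounding (reflection) argument to confine all relevant witnesses inside some $\VA_\alpha$, so that the required name can be formed as a set; and the axiom of choice is extracted from the Mixing Principle applied along a well-ordering of the underlying parameters in $V$.

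The step I expect to be the main obstacle is this last package: the Maximum/Mixing Principles together with the reflection argument used for Replacement. Getting the transfinite bookkeeping right — replacing a potentially class-sized family of Boolean-valued witnesses by a set-sized domain for a single name while controlling the Boolean values throughout the recursion, and then deriving Choice in the same spirit — is the delicate part. By contrast, once these tools are available the remaining axioms and the purely logical step are mechanical.
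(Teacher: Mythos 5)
The paper does not actually prove this statement: it records the Transfer Principle as a well-known fact and points to the standard references (Bell, Jech Ch.~14, Kusraev--Kutateladze) for the argument. Your outline is, in substance, exactly the proof given in those references, and it is correct in structure: soundness of the Boolean truth-value calculus for first-order logic (logical axioms get value $1$, modus ponens and generalization preserve value $1$), plus a case-by-case verification that each ZFC axiom has value $1$, with the congruence properties of $\llbracket x=y\rrbracket$ and the Mixing/Maximum Principles established first by transfinite induction. Two small calibrations. First, Separation does not need any reflection or rank-bounding: given a name $u$ and a formula $\varphi$, the name $w$ with $\dom(w)=\dom(u)$ and $w(t)=u(t)\wedge\llbracket\varphi(t)\rrbracket$ witnesses it directly; the rank-bounding argument is genuinely needed only for Replacement/Collection. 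Second, the Maximum Principle itself hides the same issue you flag for Replacement: $\llbracket\exists x\,\varphi(x)\rrbracket$ is a supremum over the proper class $\VA$, and one must first observe that, $\A$ being a set, this supremum is already attained by a set-sized subfamily of names, after which mixing along a disjointification of the corresponding Boolean values produces the witness $u$. With those two points in place your plan compiles into the standard complete proof; the paper simply chose to import it rather than reproduce it.
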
  

Also, it will be important to keep in mind the following results, which will allow to manipulate $\VA$ and are well-known within Boolean-valued models theory:

\begin{thm}(Maximum Principle)
\label{thm: Mixing}
Let $\varphi(x)$ be a formula with one free variable $x$. Then there exists an element $u$ of $\VA$ such that $\llbracket\varphi(u)\rrbracket=\llbracket \exists x\varphi(x)\rrbracket$. 

\end{thm}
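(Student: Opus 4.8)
Write $b:=\llbracket\exists x\,\varphi(x)\rrbracket=\bigvee_{v\in\VA}\llbracket\varphi(v)\rrbracket$. For \emph{every} $u\in\VA$ one trivially has $\llbracket\varphi(u)\rrbracket\le b$, since the join defining $b$ contains the term indexed by $v=u$; so the whole content is to produce a single name $u$ with $\llbracket\varphi(u)\rrbracket\ge b$. The first step of the plan is to replace the proper-class join defining $b$ by a join over a \emph{set}: for $\alpha\in\Ord$ set $b_\alpha:=\bigvee_{v\in\VA_\alpha}\llbracket\varphi(v)\rrbracket$, so that $(b_\alpha)_{\alpha\in\Ord}$ is nondecreasing and $\bigvee_\alpha b_\alpha=b$ (because $\VA=\bigcup_\alpha\VA_\alpha$). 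A nondecreasing $\Ord$-indexed family in the \emph{set} $\A$ cannot increase strictly along a cofinal class of ordinals (that would embed $\Ord$ into $\A$), hence it is eventually constant and $b=b_{\alpha_0}$ for some ordinal $\alpha_0$; thus $b=\bigvee_{v\in D}\llbracket\varphi(v)\rrbracket$ with $D:=\VA_{\alpha_0}$ a set.

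Second, I would extract from this a partition of $1$ carrying piecewise witnesses of $\varphi$. Let $\mathcal P$ be the set of all families $\{(a_i,u_i)\}_{i\in I}$ with the $a_i$ pairwise disjoint and nonzero in $\A$, $u_i\in D$, and $a_i\le\llbracket\varphi(u_i)\rrbracket$; order $\mathcal P$ by extension. Since $V$ models ZFC, Zorn's lemma yields a maximal $\{(a_i,u_i)\}_{i\in I}\in\mathcal P$. Put $a:=\bigvee_{i\in I}a_i$; then $a\le b$, and if $a\neq b$ then $b\wedge a^c\neq 0$, so (using $b=\bigvee_{v\in D}\llbracket\varphi(v)\rrbracket$ and completeness of $\A$) there is $v\in D$ with $c:=\llbracket\varphi(v)\rrbracket\wedge a^c\neq 0$; but then $\{(a_i,u_i)\}_{i\in I}\cup\{(c,v)\}$ lies in $\mathcal P$ and properly extends the maximal family, a contradiction. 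Hence $\bigvee_{i\in I}a_i=b$, and adjoining the pair $(b^c,u_\ast)$ for any fixed $u_\ast\in\VA$ we may assume $\{a_i\}_{i\in I}$ is a partition of $1$.

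Third, I would glue the $u_i$ by the mixing construction: define $u\in\VA$ by $\dom(u):=\bigcup_{i\in I}\dom(u_i)$ and $u(t):=\bigvee_{i:\,t\in\dom(u_i)}\bigl(a_i\wedge u_i(t)\bigr)$. Unwinding the recursive clauses for $\llbracket t\in u\rrbracket$ and $\llbracket u=v\rrbracket$, one proves by transfinite induction on ranks the \emph{mixing inequality} $a_i\le\llbracket u=u_i\rrbracket$ for each $i$. Combined with the congruence property $\llbracket u=u_i\rrbracket\wedge\llbracket\varphi(u_i)\rrbracket\le\llbracket\varphi(u)\rrbracket$, this gives for every $i\in I$
\[
a_i=a_i\wedge\llbracket\varphi(u_i)\rrbracket\le\llbracket u=u_i\rrbracket\wedge\llbracket\varphi(u_i)\rrbracket\le\llbracket\varphi(u)\rrbracket ,
\]
whence $b=\bigvee_{i\in I}a_i\le\llbracket\varphi(u)\rrbracket\le b$, i.e. $\llbracket\varphi(u)\rrbracket=b=\llbracket\exists x\,\varphi(x)\rrbracket$, as desired.

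The conceptual skeleton above is short; the real work — and the main obstacle — is the Boolean-truth-value bookkeeping invoked in the third step. Proving $a_i\le\llbracket u=u_i\rrbracket$ for the mixture, and proving the congruence property $\llbracket u=v\rrbracket\wedge\llbracket\varphi(u)\rrbracket\le\llbracket\varphi(v)\rrbracket$ for an arbitrary formula $\varphi$, both require nested transfinite inductions that carefully unpack the mutual recursion defining $\llbracket x\in y\rrbracket$ and $\llbracket x=y\rrbracket$: one first settles the atomic instances $\llbracket u=v\rrbracket\wedge\llbracket u\in w\rrbracket\le\llbracket v\in w\rrbracket$ and its companions by induction on ranks, then lifts to general $\varphi$ by induction on logical complexity, the connective cases being elementary Boolean algebra and the quantifier cases using completeness (infinite distributivity) of $\A$. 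These are precisely the routine manipulations of $\VA$ that the paper treats as background, so in the write-up I would isolate them in a preliminary lemma or cite a standard reference such as \cite{bell2005set}.
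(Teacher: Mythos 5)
Your argument is correct and is the standard textbook proof of the Maximum Principle; the paper itself offers no proof at all, presenting the theorem as background to be black-boxed (with references to Bell and Kusraev--Kutateladze), so there is nothing in the paper to diverge from. Your three-step skeleton --- (1) collapse the class join $\bigvee_{v\in\VA}\llbracket\varphi(v)\rrbracket$ to a set join via stabilization of the nondecreasing $\Ord$-indexed family $b_\alpha$, (2) extract a maximal antichain of piecewise witnesses by Zorn, using infinite distributivity of the complete algebra $\A$ to see the antichain exhausts $b$, (3) mix the witnesses and conclude via $a_i\le\llbracket u=u_i\rrbracket$ together with the substitution property $\llbracket u=v\rrbracket\wedge\llbracket\varphi(u)\rrbracket\le\llbracket\varphi(v)\rrbracket$ --- is exactly how this is done in the references, and your honest flagging of the two transfinite-induction lemmas as the real content is appropriate. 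Two small remarks. First, in step 3 you could simply invoke the paper's own Mixing Principle (stated immediately after the Maximum Principle) to get $a_i\le\llbracket u=u_i\rrbracket$ rather than re-deriving the mixture by hand; only the substitution lemma then remains as external background. Second, there is a harmless notational slip at the end: after adjoining the pair $(b^c,u_\ast)$ to make $\{a_i\}$ a partition of $1$, the identity $a_i=a_i\wedge\llbracket\varphi(u_i)\rrbracket$ fails for that padded index, and $\bigvee_{i\in I}a_i$ becomes $1$ rather than $b$; the final chain should run only over the original witness-carrying indices, whose join is $b$ --- the padded piece is needed only to put the family in the form required by the mixing construction and contributes nothing to the estimate. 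This is cosmetic and does not affect correctness.
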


\begin{thm}(Mixing Principle)
\label{thm: Mixing}
Let $\{a_i\}\in p(1)$ and let $\{x_i\}$ be a family in $\VA$. Then there exists an element $x$ in $\VA$ such that $\llbracket x=x_i\rrbracket\geq a_i$ for all $i$. Moreover, if $y$ is another element of $\VA$ which satisfies the same, then $\llbracket x=y\rrbracket=1$.
\end{thm}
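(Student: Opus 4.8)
The plan is to exhibit the mixed name explicitly and then verify both assertions by unwinding the recursive definition of the Boolean truth value of atomic formulas. Write $I$ for the (set) index of the partition. Since $\A$ is complete, I can define $x\in\VA$ by putting $\dom(x):=\bigcup_{i\in I}\dom(x_i)$ and, for $t\in\dom(x)$,
\[
x(t):=\bigvee_{i\in I}\big(a_i\wedge x_i(t)\big),
\]
adopting the convention $x_i(t):=0$ whenever $t\notin\dom(x_i)$. As the $x_i$ are indexed by a set, their ranks are bounded by some ordinal $\alpha$, so $\dom(x)\subset\VA_{\alpha}$, whence $x\in\VA_{\alpha+1}\subseteq\VA$ is indeed a legitimate element of the Boolean-valued model.

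To prove $\llbracket x=x_i\rrbracket\geq a_i$, recall that
\[
\llbracket x=x_i\rrbracket=\bigwedge_{t\in\dom(x)}\big(x(t)\Rightarrow\llbracket t\in x_i\rrbracket\big)\wedge\bigwedge_{t\in\dom(x_i)}\big(x_i(t)\Rightarrow\llbracket t\in x\rrbracket\big).
\]
Fixing $i$, it is enough to check $a_i\wedge x(t)\leq\llbracket t\in x_i\rrbracket$ for every $t\in\dom(x)$ and $a_i\wedge x_i(t)\leq\llbracket t\in x\rrbracket$ for every $t\in\dom(x_i)$, and then pass to the infimum over $t$. For the first inequality, the pairwise disjointness $a_i\wedge a_j=0$ for $j\neq i$ together with meet-over-join distributivity in $\A$ collapses the defining join of $x(t)$, giving $a_i\wedge x(t)=a_i\wedge x_i(t)$; and $a_i\wedge x_i(t)\leq x_i(t)\leq\llbracket t\in x_i\rrbracket$, where the last step is immediate from the definition of $\llbracket t\in x_i\rrbracket$ and $\llbracket t=t\rrbracket=1$ (if $t\notin\dom(x_i)$ the convention makes the left-hand side $0$). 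For the second inequality, $x(t)\geq a_i\wedge x_i(t)$ directly from the definition of $x(t)$, and $\llbracket t\in x\rrbracket\geq x(t)$ since $t\in\dom(x)$.

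For the uniqueness clause I will use the transitivity of Boolean equality, $\llbracket u=v\rrbracket\wedge\llbracket v=w\rrbracket\leq\llbracket u=w\rrbracket$ — one of the basic lemmas on atomic truth values, alternatively a consequence of the Transfer Principle applied to the ZFC-theorem that $=$ is transitive — together with the evident symmetry $\llbracket u=v\rrbracket=\llbracket v=u\rrbracket$. If $y$ also satisfies $\llbracket y=x_i\rrbracket\geq a_i$ for all $i$, then for each $i$
\[
\llbracket x=y\rrbracket\geq\llbracket x=x_i\rrbracket\wedge\llbracket x_i=y\rrbracket\geq a_i\wedge a_i=a_i,
\]
hence $\llbracket x=y\rrbracket\geq\bigvee_{i\in I}a_i=1$, as $\{a_i\}\in p(1)$.

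I do not expect a genuine obstacle; this is a routine verification once the right name is written down. The only points requiring care are bookkeeping: applying the convention $x_i(t)=0$ off $\dom(x_i)$ consistently so that both halves of the equality formula end up dominated by $a_i$, and checking that $x$ sits inside some $\VA_{\alpha+1}$. If there is anything that deserves to be called the crux, it is merely the observation that the disjointness of the partition is exactly what forces $a_i\wedge x(t)$ to reduce to $a_i\wedge x_i(t)$, which is what makes the first conjunct come out $\geq a_i$.
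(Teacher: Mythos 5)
Your construction is correct and is exactly the standard one: the paper itself states the Mixing Principle without proof, citing it as well known, and the argument in the references it points to (Bell, Jech, Kusraev--Kutateladze) is precisely your mix $x(t)=\bigvee_i\bigl(a_i\wedge x_i(t)\bigr)$ followed by the reduction of $a_i\wedge(b\Rightarrow c)$ via disjointness and the infinite distributive law $a\wedge\bigvee_j b_j=\bigvee_j(a\wedge b_j)$, which does hold in every complete Boolean algebra. The only ingredients you lean on without proof, namely $\llbracket t=t\rrbracket=1$ and the transitivity and symmetry of the Boolean truth value of equality, are genuine lemmas requiring transfinite induction rather than being ``immediate from the definition,'' but you flag them correctly as basic facts (or as consequences of the Transfer Principle), so there is no gap.
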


Let us say that two names $x,y$ are equivalent, and write $x\sim y$, when $\llbracket x=y \rrbracket = 1$. The truth value of a formula is not affected when we change a name by an equivalent one. Given a set $x$ in $V$ we define its canonical name $\check{x}$ in $\VA$. Namely, we put $\check{\emptyset}:=\emptyset$ and for $x$ in $\VA$ we define $\check{x}:\mathcal{D}\rightarrow \A$, where $\mathcal{D}:=\left\{\check{y}\colon y\in x \right\}$ and $\check{x}(\check{y}):=1$ for $y\in x$. 
It is not difficult to show that $\check{x}$ is an element of $\VA$. If $x,y,f\in \VA$ and we say, for instance, that $f$ is a name for a function $f:x\rightarrow y$, this means that $\llbracket\textnormal{``} f \text{ is a function from }x\text{ to }y\textnormal{''}\rrbracket = 1$. 
The transfer and maximum principles provide us with names $\mathbb{N}^{(\mathcal{A})}$ and $\mathbb{R}^{(\mathcal{A})}$ for the sets of natural numbers and real numbers, respectively. 
This means: $\llbracket\textnormal{``} \mathbb{N}^{(\mathcal{A})} \text{ is the set of natural numbers''}\rrbracket = 1$. 


Let $\overline{V}^{(\A)}$ be the subclass of $\VA$ defined by choosing a representative of the least rank in each class of the equivalence relation $\{(x,y)  \colon \llbracket x=y\rrbracket=1 \}$.\footnote{The construction can be done by transfinite induction. We choose a representative of each class $\{(x,y)\colon x,y\in \VA_{\alpha+1}\colon \llbracket x=y\rrbracket=1,\: \llbracket x=z\rrbracket<1\textnormal{ for al }z\in\overline{V}^{(\A)}_\alpha\}$ and define $\overline{V}_{\alpha+1}^{(\A)}$ the set of all theses representatives. For a limit ordinal $\alpha$ we put $\VA_\alpha:=\bigcup_{\xi<\alpha}\overline{V}^{(\A)}_\xi$. The class $\overline{V}^{(\A)}$ is frequently defined in literature and is called the \emph{separated universe}, see eg~\cite{kusraev2012boolean,vladimirov2013boolean}.} 
Given a name $x$ with $\llbracket x\neq\emptyset\rrbracket=1$ we define its \emph{descent} by  
\[
x\downarrow = \{ y\in\overline{V}^{(\A)} \colon \llbracket y\in x\rrbracket=1\}.
\]
Notice that, if $x\in V^{(\mathcal{A})}_\alpha$, then any element of the class $x\downarrow$ is also in $V^{(\mathcal{A})}_\alpha$. 
Therefore, we have that $x\downarrow$ is a set in $V$.  


The following result will be useful later:

\begin{thm}
\label{thm: extensional}
Let $x,y$ be elements of $\VA$ with $\llbracket (x\neq \emptyset)\wedge (y\neq\emptyset)\rrbracket=1$, let $f:x\downarrow\rightarrow y\downarrow$ be a function such that 
$$\llbracket u=v\rrbracket\leq \llbracket f(u)=f(v)\rrbracket\quad\textnormal{ for all }u,v\in x\downarrow.$$
Then there exists $g$ in $\VA$, which is a name for a function between $x$ and $y$, 
such that $\llbracket f(u)=g(u)\rrbracket=1$ for all $u\in x\downarrow$.  
\end{thm}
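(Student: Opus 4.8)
The plan is to realize $g$ as the \emph{ascent} of the graph of $f$. Writing $\langle u,v\rangle$ for a name of the Kuratowski ordered pair of two names $u,v$, set
\[
A:=\left\{\langle u,f(u)\rangle\colon u\in x\downarrow\right\}.
\]
Because $\llbracket x\neq\emptyset\rrbracket=1$, the descent $x\downarrow$ is a \emph{set} of $V$, hence so is $A\subseteq\VA$, and we may define $g$ to be the $\A$-valued function with $\dom(g)=A$ and $g(z):=1$ for all $z\in A$; this is an element of $\VA$. Unwinding the definition of the Boolean value of $\in$ gives $\llbracket z\in g\rrbracket=\bigvee_{u\in x\downarrow}\llbracket z=\langle u,f(u)\rangle\rrbracket$ for every name $z$, and in particular $\llbracket\langle u,f(u)\rangle\in g\rrbracket=1$ for each $u\in x\downarrow$.

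The core of the argument is to check that $\llbracket\textnormal{``}g\text{ is a function from }x\text{ to }y\textnormal{''}\rrbracket=1$, which amounts to verifying three first-order clauses. First, every member of $g$ is an ordered pair with first coordinate in $x$ and second in $y$: this is immediate since each generator $\langle u,f(u)\rangle$ has $u\in x\downarrow$ (so $\llbracket u\in x\rrbracket=1$) and $f(u)\in y\downarrow$ (so $\llbracket f(u)\in y\rrbracket=1$). Second, single-valuedness: for names $w,v,v'$ put $a:=\llbracket\langle w,v\rangle\in g\rrbracket$ and $b:=\llbracket\langle w,v'\rangle\in g\rrbracket$; using the standard identity $\llbracket\langle w,v\rangle=\langle u,f(u)\rangle\rrbracket=\llbracket w=u\rrbracket\wedge\llbracket v=f(u)\rrbracket$ and distributing the suprema, $a\wedge b$ is a supremum over $u,u'\in x\downarrow$ of the terms $\llbracket w=u\rrbracket\wedge\llbracket v=f(u)\rrbracket\wedge\llbracket w=u'\rrbracket\wedge\llbracket v'=f(u')\rrbracket$; each such term is $\le\llbracket u=u'\rrbracket$, hence $\le\llbracket f(u)=f(u')\rrbracket$ by the hypothesis, and therefore $\le\llbracket v=f(u)\rrbracket\wedge\llbracket f(u)=f(u')\rrbracket\wedge\llbracket f(u')=v'\rrbracket\le\llbracket v=v'\rrbracket$ by transitivity of equality; so $a\wedge b\le\llbracket v=v'\rrbracket$. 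Third, $\dom(g)$ exhausts $x$: for an arbitrary name $w$ one invokes the density of the descent, $\llbracket w\in x\rrbracket=\bigvee_{u\in x\downarrow}\llbracket w=u\rrbracket$ (a consequence of the Mixing Principle using $\llbracket x\neq\emptyset\rrbracket=1$), and observes $\llbracket w=u\rrbracket\le\llbracket\langle w,f(u)\rangle=\langle u,f(u)\rangle\rrbracket\le\llbracket\langle w,f(u)\rangle\in g\rrbracket$, so that $\llbracket w\in x\rrbracket\le\bigvee_{u\in x\downarrow}\llbracket\langle w,f(u)\rangle\in g\rrbracket\le\llbracket\exists v\,(\langle w,v\rangle\in g)\rrbracket$.

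Once $g$ is known to name a function from $x$ to $y$, for $u\in x\downarrow$ the name $g(u)$ (the value of $g$ at $u$, well defined up to $\sim$) is characterized by $\llbracket\langle u,g(u)\rangle\in g\rrbracket=1$; since also $\llbracket\langle u,f(u)\rangle\in g\rrbracket=1$ and $g$ is single-valued in the Boolean sense just established, we conclude $\llbracket f(u)=g(u)\rrbracket=1$, which is the assertion.

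I expect the single-valuedness clause to be the real obstacle: it is precisely there that the extensionality hypothesis $\llbracket u=v\rrbracket\le\llbracket f(u)=f(v)\rrbracket$ is used, and without it the ascent of the graph need not be functional. A secondary point is the descent-density identity $\llbracket w\in x\rrbracket=\bigvee_{u\in x\downarrow}\llbracket w=u\rrbracket$ invoked in the third clause, together with the elementary identity for Boolean values of ordered-pair names; neither is isolated in the material above, so I would either derive them from the Mixing Principle and the recursive definition of $\llbracket\cdot\rrbracket$ or cite them from the standard references on Boolean-valued models.
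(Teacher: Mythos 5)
Your proof is correct, and it is the standard argument: realizing $g$ as the ascent of the graph of $f$ and checking functionality via the extensionality hypothesis is exactly how this result is established in the references the paper points to (Bell; Jech; Kusraev--Kutateladze). The paper itself states Theorem \ref{thm: extensional} without proof as part of the standard Boolean-valued toolkit, so there is nothing to compare against beyond noting that your two flagged auxiliary facts --- the descent-density identity $\llbracket w\in x\rrbracket=\bigvee_{u\in x\downarrow}\llbracket w=u\rrbracket$ (which follows from the maximum principle applied to $\exists v\,(v\in x\wedge(w\in x\to v=w))$) and the componentwise Boolean value of pair equality --- are indeed standard and correctly invoked.
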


\section{A precise connection between locally $L^0$-convex analysis and Boolean-valued locally convex analysis}

Let $(\Omega,\Sigma,\PP)$ be a probability space of the universe $V$ and let $L^0$ denote the set of $\Sigma$-measurable random variables, which are identified whenever their difference is $\PP$-negligible. 
 We denote by $\F$ the measure algebra, which is defined by identifying events whose symmetric difference has probability $0$. 
 Then, $\F$ has structure of complete Boolean algebra which satisfies the \emph{countable chain condition}, that is, all partitions  are at most countable. 
 Since $\F$ is a complete Boolean algebra, one can consider the corresponding boolean-valued model $\VF$.


As shown by Takeuti~\cite{takeuti2015two}, there exists a canonical bijection  $\phi$ between $\R^{(\F)}\downarrow$ and $L^0$. 
 Moreover, the image of $\N^{(\F)}$ and $\Q^{(\F)}$ under $\phi$ are precisely $L^0(\N)$, the set of (equivalence classes) of $\N$-valued random variables; and $L^0(\Q)$, the set of (equivalence class) of $\Q$-valued random variables, respectively. 
 Besides, $\phi(r+s)=\phi(r)+\phi(s)$, $\phi(r s)=\phi(r)\phi(s)$, $\phi(0)=0$, $\phi(1)=1$,    
$\llbracket r=s\rrbracket=\bigvee\left\{A\in\F\colon 1_A\phi(r)=1_A\phi(s)\right\}$, and $\llbracket r\leq s\rrbracket=\bigvee\left\{A\in\F\colon 1_A\phi(r)\leq 1_A\phi(s)\right\}$,  for all $r,s\in\R^{(\F)}\downarrow$.

\begin{rem}
Gordon~\cite{gordon} proved that, in general, if $\A$ is an arbitrary complete Boolean algebra, the descent $\R^{(\A)}\downarrow$ is a universally complete vector lattice (i.e. every family of pairwise disjoint elements is bounded) such that $\A$ is isomorphic to the Boolean algebra of band projections in $\R^{(\A)}\downarrow$. 
Then, as a particular case, we find that $\R^{(\F)}\downarrow$ is isomorphic to the universally complete vector lattice $L^0$. 
Moreover, Takeuti~\cite{takeuti2015two} also proved that, in the case that $\A$ is the complete Boolean algebra of orthogonal projections in a Hilbert space, then $\R^{(\A)}\downarrow$ is isomorphic to the universally complete vector lattice of self-adjoint operators whose spectral resolution takes values in $\A$.  
\end{rem}


Suppose that $E$ is an $L^0$-module; that is, $E$ is a module over the ordered lattice ring $L^0$. 
We say that $E$ has the \emph{countable concatenation property} whenever for every sequence $\{x_k\}$ in $E$ and every partition $\{A_k\}\in p(\Omega)$ there exists a unique $x\in E$ (denoted by $x=\sum 1_{A_k}x_k$) such that $1_{A_k} x=1_{A_k} x_k$ for each $k\in\N$. 
This property and other related are technical assumptions that are typically assumed in literature cf.\cite{kupper03,guo10,zapata2017characterization}. 
 It should be pointed out that not every $L^0$-module has this property (for instance, see \cite[Example 2.12]{kupper03} and \cite[Example 1.1]{zapataportfolio}). 
 However, every $L^0$-module $E$ can be made into an $L^0$-module with this property by considering the quotient of a suitable equivalence relation on $E^{\N}\times p(\Omega)$ (see eg~\cite{OZ2017stabil}).\\ 

The next result describes the relation between $L^0$-modules and names for real vector spaces in $\VF$. 
Gordon~\cite{gordon1983rationally} provided an equivalence of categories between the category of names for real vector spaces and linear functions in $\VA$ and the category of unital separated injective $K$-modules and $K$-linear functions, where $K$ is a rationally complete semiprime commutative ring and $\A$ is the Boolean algebra of annihilator ideals (see~\cite{kusraev2012boolean} for terminology).  
It can be verified that $L^0$ is a rationally complete semiprime commutative ring whose annihilator ideals coincide with band projections.  
Besides, it can be checked that the countable concatenation property of an $L^0$-module $E$ is equivalent to the injectivity of $E$.   
Thus  Theorem \ref{thm: connectI} below is a particular case of the mentioned equivalence of categories in \cite{gordon1983rationally}. 
However, for the convenience of the reader, we provide a self-contained proof for this particular case.\footnote{In general, for any Boolean-algebra $\A$, one has that $\R^{(\A)}\downarrow$ is also a rationally complete semiprime commutative ring whose annihilator ideals coincide with bands, thus the equivalence of categories in~\cite{gordon1983rationally} also applies. For a proof of this particular case see~\cite[p. 198]{kusraev2012boolean}.}   

\begin{thm}
\label{thm: connectI}
For fixed an underlying measure algebra $\F$, there is an equivalence of categories between the category of names for real vector spaces and linear functions in $\VF$, and the category of $L^0$-modules with the countable concatenation property and $L^0$-linear functions.
\end{thm}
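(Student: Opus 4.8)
The plan is to show that the \emph{descent functor} $E\mapsto E\downarrow$ implements the equivalence, the hard part being its essential surjectivity. First I would turn descent into a functor. Given a name $E$ with $\llbracket E\text{ is a vector space over }\R^{(\F)}\rrbracket=1$ (so automatically $\llbracket E\neq\emptyset\rrbracket=1$), the set $E\downarrow$ carries an addition and a scalar multiplication obtained by descending the names for the operations on $E$; through the correspondence $\phi\colon\R^{(\F)}\downarrow\to L^0$ this makes $E\downarrow$ an $L^0$-module, the vector space axioms descending because they are universally quantified sentences and $\llbracket(\forall u\in x)\varphi(u)\rrbracket=\bigwedge_{u\in x\downarrow}\llbracket\varphi(u)\rrbracket$ (itself a consequence of the Mixing Principle). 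The countable concatenation property of $E\downarrow$ is then essentially a restatement of the Mixing Principle: given $\{x_k\}\subseteq E\downarrow$ and $\{A_k\}\in p(\Omega)$ with associated partition $\{a_k\}\in p(1)$ of $\F$, the Mixing Principle yields $x$ with $\llbracket x=x_k\rrbracket\geq a_k$; one checks that $\llbracket x\in E\rrbracket=1$, that $\llbracket x=x_k\rrbracket\geq a_k$ amounts to $1_{A_k}x=1_{A_k}x_k$, and that uniqueness follows from the ``moreover'' clause of the Mixing Principle. On arrows, the descent of a name for an $\R^{(\F)}$-linear map is $L^0$-linear by translating pointwise linearity with the same device, and descent visibly respects composition and identities.

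Since a full, faithful and essentially surjective functor is an equivalence, it remains to check these three properties. Faithfulness is immediate: if $f,f'$ are names for linear maps $E\to F$ with $f\downarrow=f'\downarrow$, then $\llbracket f(u)=f'(u)\rrbracket=1$ for every $u\in E\downarrow$, so $\llbracket f=f'\rrbracket=\bigwedge_{u\in E\downarrow}\llbracket f(u)=f'(u)\rrbracket=1$. For fullness, let $T\colon E\downarrow\to F\downarrow$ be $L^0$-linear; the key point is that $T$ is \emph{extensional}. Indeed, writing $a=\llbracket u=v\rrbracket$ and letting $1_A\in L^0$ be the band projection associated with $a$, the inequality $\llbracket u=v\rrbracket\geq a$ amounts to $1_Au=1_Av$, whence $1_AT(u)=T(1_Au)=T(1_Av)=1_AT(v)$ by $L^0$-linearity, i.e. $\llbracket T(u)=T(v)\rrbracket\geq a$. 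Theorem \ref{thm: extensional} then produces a name $g$ for a function $E\to F$ with $\llbracket g(u)=T(u)\rrbracket=1$ for all $u\in E\downarrow$, so $g\downarrow=T$; and $g$ is a name for a \emph{linear} map because, reducing $\llbracket(\forall u,v\in E)(\forall\lambda\in\R^{(\F)})\,g(\lambda u+v)=\lambda g(u)+g(v)\rrbracket$ through the identity displayed above, it suffices that $T(\lambda u+v)=\lambda T(u)+T(v)$ holds for all $u,v\in E\downarrow$ and $\lambda\in L^0$, which it does.

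For essential surjectivity I would realize a given $L^0$-module $M$ with the countable concatenation property as a descent. Put $W:=(\R^{(\F)})^{(\check M)}$, a name for the space of finitely supported functions $\check M\to\R^{(\F)}$; using the Maximum and Mixing Principles one checks that $W\downarrow$ is canonically the countable-concatenation hull of the free $L^0$-module $(L^0)^{(M)}$, and that $\delta_m\mapsto m$ extends to a surjective $L^0$-linear map $\pi\colon W\downarrow\to M$. The kernel $N:=\ker\pi$ is an $L^0$-submodule closed under countable concatenation, and since $N\subseteq W\downarrow\subseteq\overline{V}^{(\F)}$ is a set of names one may form the ascent $N\uparrow$ (with domain $N$ and constant value $1$), checking that $\llbracket N\uparrow\text{ is a linear subspace of }W\rrbracket=1$ and, using that $\F$ satisfies the countable chain condition, that $(N\uparrow)\downarrow=N$. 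Then $E:=W/N\uparrow$ is a name for an $\R^{(\F)}$-vector space with $E\downarrow\cong W\downarrow/N\cong M$ as $L^0$-modules, descent commuting with this quotient by another application of the Maximum Principle. I expect this last step --- producing a faithful ascent of an abstract $L^0$-module and transporting its module structure across the descent --- to be the main obstacle; it is also, together with the extensionality argument feeding Theorem \ref{thm: extensional}, the place where the countable concatenation hypothesis is genuinely used.
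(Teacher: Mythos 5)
Your descent half coincides with the paper's: the same use of the Mixing Principle to obtain the countable concatenation property of $E\downarrow$, and the same extensionality-plus-Theorem~\ref{thm: extensional} device to lift $L^0$-linear maps to names (your fullness argument, showing $\llbracket u=v\rrbracket\leq\llbracket T(u)=T(v)\rrbracket$ from locality of $T$, is exactly the inequality $A_{x,y}\leq A_{f(x),f(y)}$ the paper feeds into that theorem). Where you genuinely diverge is in producing the quasi-inverse. The paper builds it by hand: for an $L^0$-module $E$ with the countable concatenation property it defines, for each $x\in E$, a name $\bar{x}$ with domain $\{\check{y}\colon y\in E\}$ and $\bar{x}(\check{y}):=A_{x,y}=\bigvee\{B\colon 1_B(x-y)=0\}$, computes $\llbracket \bar{x}=\bar{y}\rrbracket=A_{x,y}$ directly from the atomic truth values (the concatenation property entering to show the supremum defining $A_{x,y}$ is attained, via a maximal disjoint family and the countable chain condition), and checks that $x\mapsto\tilde{x}$ is a bijection onto $\tilde{E}\downarrow$. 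You instead argue that descent is full, faithful and essentially surjective, realizing $M$ as $W\downarrow/N$ with $W=(\R^{(\F)})^{(\check{M})}$ and ascending the kernel. Both routes work. The paper's explicit ascent buys a concrete functor $E\mapsto\tilde{E}$, $x\mapsto\tilde{x}$ that is reused verbatim throughout Section 2 (for stable subsets, neighborhood bases, filters, duals, etc.), whereas your presentation-theoretic construction yields the object only up to isomorphism and defers comparable work into the claims that $W\downarrow$ is the concatenation hull of the free module, that $(N\uparrow)\downarrow=N$, and that descent commutes with the quotient $W/N\uparrow$ --- each of which requires essentially the same maximal-disjoint-family and ccc arguments the paper carries out once for $A_{x,y}$. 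So the total effort is comparable; your route is more categorical and choice-dependent in extracting the quasi-inverse, the paper's is more serviceable for the sequel.
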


\begin{proof}
If we take any $E$ in $\VF$, which is a name for a real vector space, then $E\downarrow$ can be endowed with structure of $L^0$-module with the countable concatenation property. Indeed, for $x,y\in E\downarrow$ and $\eta\in L^0$, we define $x+y:=u$ where $u$ is the unique element of $E\downarrow$ such that $\llbracket x+y=u\rrbracket=\Omega$; we define $\eta x=v$, where $v$ is the unique element of $E\downarrow$ such that $\llbracket \phi^{-1}(\eta)x=v\rrbracket=\Omega$. 
It follows by inspection that $E\downarrow$ is an $L^0$-module. In addition, if $\{A_k\}\in p(\Omega)$ and $\{u_k\}\subset E\downarrow$, then by the mixing principle (Theorem \ref{thm: Mixing}) there exists a unique $u\in E\downarrow$ such that $A_k\leq\llbracket u=u_k\rrbracket$ for all $k\in\N$. Let $\tilde{1}_{A_k}:=\phi^{-1}(1_{A_k})$. One has $\llbracket\tilde{1}_{A_k}=1\rrbracket=A_k$ and $\llbracket\tilde{1}_{A_k}=0\rrbracket=A_k^c$ each $k$. Then
$$\llbracket \tilde{1}_{A_k}u=\tilde{1}_{A_k}u_k\rrbracket\geq \llbracket((u=u_k)\wedge (\tilde{1}_{A_k}=1))\vee (\tilde{1}_{A_k}=0)\rrbracket$$
$$=(\llbracket u=u_k\rrbracket\wedge \llbracket\tilde{1}_{A_k}=1\rrbracket)\vee \llbracket \tilde{1}_{A_k}=0\rrbracket =A_k\vee A_k^c=\Omega\quad\textnormal{ for all }k.$$ That is, $1_{A_k}u=1_{A_k}u_k$ for each $k$. 
This shows that $E\downarrow$ has also the countable concatenation property.

Now, suppose that $f,E,F$ are elements in $\VF$ such that  $E,F$ are names for real vector spaces and $f$ is a name for a linear function between $E$ and $F$ in $\VF$. 
Then, slightly abusing the notation, let $f\downarrow$ denote the unique map between $E\downarrow$ and  $F\downarrow$ satisfying $\llbracket f\downarrow(x)=f(x)\rrbracket=\Omega$ for all $x\in E\downarrow$. Then it can be verified that $f\downarrow:E\downarrow\rightarrow F\downarrow$ is an $L^0$-module morphism. 

Thus, we define the functor $G(E):= E\downarrow$, $G(f):= f\downarrow$. 

 Let us turn to the description of the inverse functor. 
Let $E$ be an $L^0$-module with the countable concatenation property. 
We will show that $E$ can be made into an element $\tilde{E}$ of $\VF$ which is a name for a real vector space. 
Indeed, for each $x\in E$ we define $\bar{x}:\mathcal{D}_x\rightarrow\F$ with
$$\quad\mathcal{D}_x:=\left\{\check{y}\colon y\in E \right\}\textnormal{, and }\bar{x}(\check{y}):=A_{x,y}\textnormal{ for }y\in E,$$
where $A_{x,y}:=\bigvee\left\{B\in\F\colon 1_B(x-y)=0\right\}$.  
Then, let $\tilde{E}:\mathcal{D}\rightarrow\F$ where 
$\mathcal{D}:=\left\{ \bar{x} \colon x\in E \right\}$ with $\tilde{E}(\bar{x}):=\Omega$ for each $x\in E$.

One has that $\tilde{E}$ is an element of $\VF$. 
Moreover, we claim that $\llbracket \bar{x}=\bar{y}\rrbracket=A_{x,y}$ for all $x,y\in E$. 
Indeed, given $x,y\in E$ one has
\begin{equation}
\label{eqI}
\llbracket \bar{x}=\bar{y}\rrbracket=\underset{u\in E}\bigwedge \left(A_{x,u}\Rightarrow\llbracket \check{u}\in\bar{y}\rrbracket \right)\wedge\underset{v\in E}\bigwedge \left(A_{y,v}\Rightarrow\llbracket \check{v}\in\bar{x}\rrbracket \right).
\end{equation}

In addition,
\[
\llbracket \check{u}\in\bar{y}\rrbracket=\underset{w\in E}\bigvee\left( A_{y,w}\wedge\llbracket \check{u}=\check{w}\rrbracket\right)=A_{y,u},
\]
since $\llbracket \check{u}=\check{w}\rrbracket=\Omega$ if $u=w$, and $\llbracket \check{u}=\check{w}\rrbracket=\emptyset$ otherwise.
 Analogously, we obtain $\llbracket \check{v}\in\bar{x}\rrbracket=A_{x,v}$.

Therefore, replacing in (\ref{eqI}), one has 
\[
\llbracket \bar{x}=\bar{y}\rrbracket=\underset{u\in E}\bigwedge \left(A_{x,u}^c\vee A_{y,u}\right) \wedge\underset{v\in E}\bigwedge \left( A_{y,v}^c\vee A_{x,v} \right).
\]

By considering above $u=x$ and $v=y$, it follows $\llbracket \bar{x}=\bar{y}\rrbracket\leq A_{x,y}$. 
Now, if we show that $A_{x,y}\leq A_{x,u}^c\vee A_{y,u}$ for each $u\in E$, we obtain the assertion. 
Aiming at a contradiction, suppose that $\emptyset< A:=A_{x,y}\wedge(A_{x,u}^c\vee A_{y,u})^c$. 
First, let us show that the supremum that defines $A_{x,y}$ is in fact attained for every $x,y\in E$. That is just to show that $1_{A_{x,y}}(x-y)=0$. Consider a maximal family $\mathfrak{M}$ of pairwise disjoint elements $B\in \mathcal{F}$ such that $1_B(x-y)=0$. By maximality, $\mathfrak{M}\in p(A_{x,y})$, and by countable chain condition this partition is countable. The uniqueness in the countable concatenation property yields that $1_{A_{x,y}}x=1_{A_{x,y}}y$.
 Finally, since $A\leq A_{x,y}\wedge A_{x,u}$ one has $1_A y=1_A x=1_A u$.  But $A\leq A_{y,u}^c$, hence  $1_A y\neq 1_A u$, which is a contradiction. 
 
For any $x\in E$, let $\tilde{x}$ be the representative in $\overline{V}^{(\F)}$ of the name $\bar{x}$. 
The function $E\rightarrow(\tilde{E})\downarrow$ given by $\:x\mapsto\tilde{x}$ is a bijection. 
Indeed, if $\tilde{x}=\tilde{y}$, then $\Omega=\llbracket \bar{x}=\bar{y}\rrbracket=A_{x,y}$; since $A_{x,y}$ is attained, it follows that $x=y$. 
Now, suppose that $z\in (\tilde{E})\downarrow$. Then 
$$\Omega=\llbracket z\in\tilde E\rrbracket=\underset{x\in E}\bigvee\llbracket \bar{x}=z\rrbracket.$$ 
We can find a partition $\{A_k\}\in p(\Omega)$ such that $A_k\leq \llbracket \bar{x}_k=z\rrbracket$ for some $x_k\in E$, each $k$. The countable concatenation property yields an $x$ so that $1_{A_k}x=1_{A_k}x_k$ for all $k$. 
One has $\llbracket\bar{x}=\bar{x}_k \rrbracket=A_{x,x_k}\geq A_k$. 
Due to the mixing principle we obtain $\llbracket z=\bar{x}\rrbracket=\Omega$, and taking representatives in $\overline{V}^{(\F)}$, we conclude that $z=\tilde{x}$.

For $x,y\in E$ and $r\in \R^{(\F)}\downarrow$ we  put $\llbracket \tilde{x}+\tilde{y}=u\rrbracket=\Omega$ whenever $\llbracket (x+y)^{\sim}=u\rrbracket=\Omega$ and $\llbracket z=r\cdot \tilde{x}\rrbracket=\Omega$ if $\llbracket (\phi(r)x)^{\sim}=z\rrbracket=\Omega$. Since the mapping $x\mapsto\tilde{x}$ is bijective and due to Theorem \ref{thm: extensional}, the operations are well-defined and $\tilde{E}$ is a name for a vector space in $\VF$. 

Now, suppose that $f:E\rightarrow F$ is a morphism between $L^0$-modules with the countable concatenation property. 
Then we define the application $g:\tilde{E}\downarrow\rightarrow\tilde{F}\downarrow$, $\tilde{x}\mapsto (f(x))^{\sim}$, which is well defined as $x\mapsto\tilde{x}$ is one-to-one. 
Using that $f$ is $L^0$-linear, we have that for every $x,y\in E$, $\llbracket \tilde{x}=\tilde{y}\rrbracket=A_{x,y}\leq A_{f(x),f(y)}=\llbracket (f(x))^{\sim}=(f(y))^{\sim}\rrbracket$. 
Then according to Theorem \ref{thm: extensional}, there exists $\tilde{f}$ in $\VF$ such that $\llbracket \tilde{f}:\tilde{E}\rightarrow\tilde{F}\rrbracket=\Omega$ and $\llbracket (f(x))^{\sim}=\tilde{f}(\tilde{x})\rrbracket=\Omega$ for all $x\in E$.
 In particular, we have that $\llbracket \tilde{f}(\tilde{x}+\tilde{y})=\tilde{f}(\tilde{x})+\tilde{f}(\tilde{y})\rrbracket=\Omega$ and $\llbracket \tilde{f}(r\cdot\tilde{x})=r\cdot \tilde{f}(\tilde{x})\rrbracket=\Omega$ for all $x,y\in E$ and $r\in \R^{(\F)}\downarrow$.

We define the functor $H(E):=\tilde{E}$, $H(f):=\tilde{f}$. 
Then the functors $F$ and $H$ are inverse equivalences. 
Indeed, suppose that $E$ is an $L^0$-module with the countable concatenation property. 
We have proved that $E\rightarrow(\tilde{E})\downarrow$, $x\mapsto\tilde{x}$ is a bijection. 
It is easy to verify that it is in fact an isomorphism of $L^0$-modules which defines a natural isomorphism between the functors $FH$ and the identity functor. 

 Also, given a name $E$ for a vector space in $\VF$, we consider the map $E\downarrow\rightarrow ((E\downarrow)^{\sim})\downarrow$, $x\mapsto\tilde{x}$. By applying Theorem \ref{thm: extensional}, we obtain a name for an isomorphism of vector spaces in $\VF$; that is, $\llbracket(E\downarrow)^{\sim}\cong E\rrbracket=\Omega$.
 Inspection shows that $HF$ is naturally isomorphic to the functor identity.
\end{proof}

Let us introduce some terminology:

If $E$ is an $L^0$-module with the countable concatenation property:
\begin{itemize}
\item $S\subset E$ is said to be:
\begin{enumerate}
\item \emph{$L^0$-convex}: if $\eta x + (1-\eta)y\in S$ for all $x,y\in S$ and $\eta\in L^0$ with $0\leq\eta\leq 1$;
\item \emph{$L^0$-absorbing}: if for every $x\in E$ there is $\eta\in L^0$, $\eta>0$, such that $x\in \eta x$;
\item \emph{$L^0$-balanced}: if $\eta x\in S$ whenever $x\in S$ and $\eta\in L^0$ with $|\eta|\leq 1$.
\end{enumerate}
\item A non-empty subset $S\subset E$ is said to be \emph{stable under countable concatenations}, or simply \emph{stable}, if for every countable family $\{x_k\}\subset S$ and partition $\{A_k\}\in p(\Omega)$, it holds that $\sum 1_{A_k}x_k\in S$.
\item A non-empty collection $\mathscr{C}$ of subsets of $E$ is called \emph{stable} if every $S\in \mathscr{C}$ is stable and for every countable family  $\{S_k\}\subset\mathscr{C}$ and partition $\{A_k\}\in p(\Omega)$, it holds that $\sum 1_{A_k}S_k\in \mathscr{C}$.
\end{itemize}

Filipovic et al.~\cite{kupper03} introduced the notion of locally $L^0$-convex module. 
Let us recall the following particular case, which was introduced in \cite{OZ2017stabil} and is a transcription in the present setting (via the equivalence of categories provided in \cite[Theorem 1.2]{OZ2017stabil}) of the notion of \emph{conditionally locally topological vector space} introduced in \cite{DJKK13}. 

\begin{defn} 
\label{def: stabL0mod}
A topological $L^0$-module $E[\mathscr{T}]$ with the countable concatenation property is said to be a \emph{stable locally  $L^0$-convex module} if there exists a neighborhood base $\mathscr{U}$ of $0\in E$ such that:
 \begin{itemize}
 \item[(i)] $\mathscr{U}$ is a stable collection;
 \item[(ii)] Every $U\in\mathscr{U}$ is $L^0$-convex, $L^0$-absorbing and $L^0$-balanced.
 \end{itemize}
 In this case, $\mathscr{T}$ is called a \emph{stable locally $L^0$-convex topology} on $E$.
\end{defn}

To our knowledge, the next result is new in literature; it describes the connection between names for locally convex spaces in $\VF$ and stable locally $L^0$-convex modules:

\begin{thm}
\label{thm: connectII}
For fixed an underlying measure algebra $\F$, there is an equivalence of categories between the category of names for locally convex spaces and continuous linear functions, and the category of stable locally $L^0$-convex modules and continuous $L^0$-module morphims.
\end{thm}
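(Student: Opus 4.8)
The plan is to leverage Theorem~\ref{thm: connectI}: its functors $G=(\cdot)\downarrow$ and $H=(\cdot)^{\sim}$ already implement inverse equivalences between names for real vector spaces and $L^0$-modules with the countable concatenation property, so all that remains is to show that they transport the extra topological data faithfully in both directions and preserve continuity of morphisms.

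The heart of the argument is a \emph{dictionary for subsets}, a set-valued counterpart of Theorem~\ref{thm: extensional}. First I would establish that, for a name $E\in\VF$ for a vector space, $S\mapsto S\downarrow$ is a bijection between the equivalence classes of names $S$ with $\llbracket S\subseteq E\rrbracket=1$ and the stable subsets of $E\downarrow$: injectivity is immediate, and surjectivity ascends a stable set to a name exactly as Theorem~\ref{thm: extensional} ascends an extensional function, using the mixing principle (Theorem~\ref{thm: Mixing}). Next I would check that this bijection matches the relevant predicates: for a stable subset $S=S'\downarrow$ of $E\downarrow$, one has $\llbracket S'\text{ is convex}\rrbracket=1$ if and only if $S$ is $L^0$-convex, $\llbracket S'\text{ is balanced}\rrbracket=1$ if and only if $S$ is $L^0$-balanced, and $\llbracket S'\text{ is absorbing}\rrbracket=1$ if and only if $S$ is $L^0$-absorbing. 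Each equivalence is obtained by unwinding the definitions through the identifications $L^0=\R^{(\F)}\downarrow$ and the module operations of Theorem~\ref{thm: connectI}, the quantifier rules $\llbracket\forall x\in X\,\varphi(x)\rrbracket=\bigwedge_{x\in X\downarrow}\llbracket\varphi(x)\rrbracket$ and $\llbracket\exists x\in X\,\varphi(x)\rrbracket=\bigvee_{x\in X\downarrow}\llbracket\varphi(x)\rrbracket$, and the $1_A$-patching built into the $L^0$-notions; in the convex and absorbing cases one passes between a Boolean supremum of local scalar witnesses and a single element of $L^0$ by the maximum and mixing principles, which is where stability of $S$ is used. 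Finally I would lift the dictionary one level: stable collections of subsets of $E\downarrow$ correspond bijectively to names for families of subsets of $E$, compatibly with finite intersections, inclusions and refinement.

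With the dictionary in place the construction of the two functors is routine. Given a name $E$ for a locally convex space, the transfer principle and the maximum principle produce a name $\mathscr{B}$ for a neighbourhood base of $0$ consisting of convex, balanced (hence absorbing) subsets of $E$; its descent $\mathscr{U}:=\{\,S\downarrow : S\in\mathscr{B}\downarrow\,\}$ is then, by the dictionary, a stable collection of $L^0$-convex, $L^0$-absorbing and $L^0$-balanced subsets of $E\downarrow$, and the filter-base and translation axioms of a neighbourhood base transfer term by term, so $\mathscr{U}$ is a base for a stable locally $L^0$-convex topology on $E\downarrow$; that this topology does not depend on the choice of $\mathscr{B}$ follows because two names for neighbourhood bases of the same topology mutually refine each other, a relation preserved by the dictionary. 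Conversely, from a stable locally $L^0$-convex module $E[\mathscr{T}]$ with stable base $\mathscr{U}$ the dictionary ascends $\mathscr{U}$ to a name $\mathscr{B}$ for a family of convex, balanced, absorbing subsets of $\tilde E:=H(E)$ satisfying the neighbourhood-base axioms, whence $\llbracket\mathscr{B}\text{ is a neighbourhood base of }0\text{ for a locally convex topology on }\tilde E\rrbracket=1$. For morphisms, recall that an $L^0$-linear $f\colon E\to F$ is continuous precisely when $f^{-1}(V)$ contains some member of $\mathscr{U}_E$ for every $V\in\mathscr{U}_F$; since $f=\tilde f\downarrow$ and the dictionary commutes with preimages, this condition is equivalent, in both directions, to $\llbracket\tilde f\text{ is continuous}\rrbracket=1$. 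Hence the functors $G$ and $H$ of Theorem~\ref{thm: connectI} restrict to functors between the two topological categories, they remain mutually inverse, and the natural isomorphisms $GH\cong\mathrm{id}$ and $HG\cong\mathrm{id}$ are inherited without change.

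The step I expect to be delicate is the subset dictionary, and within it the faithful matching of the $L^0$-flavoured notions of convexity, absorption and balancedness — which are formulated with $1_A$-patching and with scalars ranging over the lattice ring $L^0$ — with the plain notions for the corresponding name. The absorbing case deserves particular care: to translate $\llbracket S'\text{ absorbing}\rrbracket=1$ one must convert, for each point, a Boolean supremum of scalar witnesses into a single strictly positive element of $L^0$, and it is precisely here that stability of $S$ together with the mixing and maximum principles are needed; the remaining verifications (filter-base axioms, compatibility with the module operations and with preimages, naturality) are bookkeeping on top of Theorem~\ref{thm: connectI}.
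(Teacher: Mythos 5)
Your proposal is correct and follows essentially the same route as the paper: restrict the functors $G$ and $H$ of Theorem~\ref{thm: connectI}, transport neighbourhood bases via the descent/ascent correspondence between names for (collections of) subsets and stable (collections of) subsets, check that convexity, balancedness and absorption match their $L^0$-counterparts, and verify continuity of morphisms through preimages of basic neighbourhoods. The only difference is organizational: you isolate the subset dictionary as a standalone lemma, whereas the paper carries out the same verifications inline (e.g.\ computing $\llbracket\tilde U=\tilde V\rrbracket=A_{U,V}$ and proving the bijection $U\mapsto\tilde U\downarrow$ directly), and your identification of the absorbing case as the delicate point is exactly where the paper invokes stability and the mixing principle.
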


\begin{proof}

We consider the same functor $G$ as in Theorem \ref{thm: connectI}, 
but restricted to the category of names for locally convex spaces and continuous linear functions in $\VF$.
 
 Let $E[\mathcal{T}]$ be a name for a locally convex space in $\VF$; that is, $\mathcal{T}$ is a name for a locally convex topology in $\VF$. Let $\mathcal{U}$ be a name for a neighborhood base of the origin, such that 
 $$\llbracket\forall U\in\mathcal{U}(\textnormal{``}U\textnormal{ is convex'' }\wedge \textnormal{``}U\textnormal{ is absorbing'' }\wedge \textnormal{``}U\textnormal{ is balanced''})\rrbracket=\Omega.$$
 
 We know that $E\downarrow$ is an $L^0$-module with the countable concatenation property. 
 Let $\mathcal{U}\Downarrow:=\left\{ U\downarrow \colon U\in\mathcal{U}\downarrow\right\}$.
 Then every $U\in\mathcal{U}\Downarrow$ is stable due to the mixing principle. 
In the same way, again due to the mixing principle,  it holds that $\sum 1_{A_k}U_k\in\mathcal{U}\Downarrow$ whenever $\{U_k\}\subset\mathcal{U}\Downarrow$ and $\{A_k\}\in p(\Omega)$. 
Therefore, $\mathcal{U}\Downarrow$ is a stable collection. 
 Also, it is not difficult to show that each $U\in\mathcal{U}\Downarrow$ is $L^0$-convex, $L^0$-absorbing and $L^0$-balanced, and $\mathcal{U}\Downarrow$ is a neighborhood base of  $0\in E\downarrow$ of a topology $\mathscr{T}$.
 Therefore $E\downarrow[\mathscr{T}]$ is a stable locally $L^0$-convex module.

Now, let $f,E,F$ be elements of $\VF$ such that $E,F$ are names for locally convex spaces and $f$ is a name for a continuous linear function between $E$ and $F$ in $\VF$; that is, $\llbracket f\in L(E,F)\rrbracket=\Omega$. 
Then we can consider $f\downarrow$, which is an $L^0$-module morphism between $E\downarrow$ and $F\downarrow$ such that $\llbracket f\downarrow(x)=f(x) \rrbracket=\Omega$ for all $x\in E\downarrow$. The function $f\downarrow$ is also continuous. To see this, consider $U\downarrow$ a basic neighborhood of 0 in $F$. Then, since $f$ is continuous, there is a name for a basic neighborhood $W$ in $E$ such that $\llbracket W\subset f^{-1}(U)\rrbracket=\Omega$. 
Then $W\downarrow \subset f\downarrow^{-1}(U\downarrow)$ and this proves that $f\downarrow$ is continuous.

Conversely, let $E[\mathscr{T}]$ be a stable locally $L^0$-convex module. 
We consider $\tilde{E}$ as in the proof of Theorem \ref{thm: connectI}. 
Let $\mathscr{U}$ be a neighborhood base of $0\in E$ as in Definition \ref{def: stabL0mod}.
 For every $U\in\mathscr{U}$,  we can define $\tilde{U}:\mathcal{D}_U\rightarrow\F$, where $\mathcal{D}_{U}:=\left\{ \tilde{x} \colon x\in U\right\}$ and $\tilde{U}(\tilde{x}):=\Omega$. 
 Note that $\tilde{U}$ is an element of $\VF$. 
 The map $x\mapsto\tilde{x}$ gives a bijection between $U$ and $\tilde{U}\downarrow$. Injectivity was checked in the proof of Theorem~\ref{thm: connectI}. For surjectivity, if $w\in \tilde{U}\downarrow$ then $1 = \llbracket w\in\tilde{U}\rrbracket = \bigvee\{\llbracket w=\tilde{x}\rrbracket \colon x\in U\}$. Take a maximal family of pairs $A_k,x_k$ such that $x_k\in U$, the $A_k$ are nonzero and pairwise disjoint and $A_k \leq \llbracket w=\tilde{x}_k \rrbracket$. Using that $U$ is stable\footnote{In the sequel, we will omit the details of this usage of the countable concatenation property, that follows always the same scheme through a maximal disjoint family of nonzero elements.}, we obtained the desired preimage of $w$. 
 Now, we consider $\tilde{\mathscr{U}}:\mathcal{D}_{\mathscr{U}}\rightarrow\F$ 
 where $\mathcal{D}_{\mathscr{U}}:=\left\{ \tilde{U} \colon U\in\mathscr{U}\right\}$ and $\tilde{\mathscr{U}}(\tilde{U})=\Omega$.

 For each $U,V\in \mathscr{U}$, let $$A_{U,V}:=\bigvee\left\{A\in\F \colon 1_A U=1_A V\right\}.$$  
 We claim that $A_{U,V}=\llbracket \tilde{U}=\tilde{V}\rrbracket$. Indeed, 
 \[
 \llbracket \tilde{U}=\tilde{V}\rrbracket=\underset{x\in U}\bigwedge\llbracket \tilde{x}\in\tilde{V}\rrbracket \wedge \underset{y\in V}\bigwedge\llbracket \tilde{y}\in\tilde{U}\rrbracket=\underset{x\in U}\bigwedge\underset{y\in V}\bigvee A_{x,y} \wedge \underset{y\in V}\bigwedge\underset{x\in U}\bigvee A_{x,y}.
 \]

For every $x\in U$, by using that $V$ is stable similarly as above, one has that $1_{A_{U,V}}x=1_{A_{U,V}}y_x$ for some $y_x\in V$.
 Then $A_{U,V}\leq A_{x,y_x}$ for all $x\in U$. 
 Likewise, for every $y\in V$ we can find $x_y\in U$ with $A_{U,V}\leq A_{x_y,y}$.
  We conclude that  $A_{U,V}\leq \llbracket \tilde{U}=\tilde{V}\rrbracket$.

 By using again that $V$ is stable, for each $x\in U$ one can find $y^x\in V$ such that 
 $\bigvee_{y\in V} A_{x,y}=A_{x,y^x}$. 
 Similarly, for every $y\in V$ one can pick up $x^y\in U$ with $\bigvee_{x\in U} A_{x,y}=A_{x^y,y}$. 
 By using this in the expression above for $\llbracket \tilde{U}=\tilde{V}\rrbracket$, we get that always $\llbracket \tilde{U}=\tilde{V}\rrbracket \leq A_{x,y^x}$ and $\llbracket \tilde{U}=\tilde{V}\rrbracket \leq A_{y,x^y}$, and hence for every $x\in U$ we find that $1_{\llbracket \tilde{U}=\tilde{V}\rrbracket}x = 1_{\llbracket \tilde{U}=\tilde{V}\rrbracket}y^x$ and for every $y\in V$ we have that $1_{\llbracket \tilde{U}=\tilde{V}\rrbracket}x^y = 1_{\llbracket \tilde{U}=\tilde{V}\rrbracket}y$. 
 It follows that $1_{\llbracket \tilde{U}=\tilde{V}\rrbracket}U = 1_{\llbracket \tilde{U}=\tilde{V}\rrbracket}V$, and therefore $\llbracket \tilde{U}=\tilde{V}\rrbracket\leq A_{U,V}$.

 We proved before that the image of any $U\in\mathscr{U}$ via the mapping $x\mapsto\tilde{x}$ is $\tilde{U}\downarrow$. 
 Now we claim that the assignment $U\mapsto \tilde{U}\downarrow$ is a bijection from $\mathscr{U}$ to $\tilde{\mathscr{U}}\Downarrow$. The assignment is injective because $x\mapsto\tilde{x}$ is injective. It is surjective because if $W\downarrow\in \tilde{\mathscr{U}}\Downarrow$ with $W\in \tilde{\mathscr{U}}\downarrow$, then $\Omega = \llbracket W\in \mathcal{U}\rrbracket = \bigvee_{U\in \mathscr{U}}\llbracket W = \tilde{U}\rrbracket$, we can take a maximal disjoint family $\{B_k\}$ such that $B_k \leq \llbracket \tilde{U}_k=W\rrbracket$, and using the mixing principle and that $\mathscr{U}$ is a stable collection, we get that $V:=\sum 1_{A_k}U_k$ is a preimage for $W$. 
  
 Using that $\mathscr{U}$ is a neighborhood base of $0\in E$, it can be verified that $\tilde{\mathscr{U}}$ is a name for a  neighborhood base of the origin of a locally convex topology in $\VF$.
 
 Now, suppose that $f:E_1[\mathscr{T}_1]\rightarrow E_2[\mathscr{T}_2]$ is a continuous $L^0$-module morphism. Then we know that $\tilde{f}:\tilde{E}_1\rightarrow \tilde{E}_2$ is a name for a linear function. It is in fact a name for a continuous linear functional, because if a basic neighborhoods satisfy $f(U)\subset V$, then $\llbracket \tilde{f}(\tilde{U}) \subset \tilde{V}\rrbracket = \Omega$.
 
 Let $E[\mathscr{T}]$ be a stable locally $L^0$-convex module.  
 We know that the map $E\rightarrow \tilde{E}\downarrow$, $x\mapsto\tilde{x}$ is an isomorphism of $L^0$-modules. 
 Moreover, we proved before that $\mathscr{U}$ and $\tilde{\mathscr{U}}\Downarrow$ are one-to-one relation via $x\mapsto\tilde{x}$. 
 Consequently, $E[\mathscr{T}]\rightarrow (\tilde{E})\downarrow$, $x\mapsto\tilde{x}$ is also a homeomorphism.
 
 If $E[\mathcal{T}]$ is a name for a locally convex space, then $E[\mathcal{T}]\downarrow$ is a stable locally $L^0$-convex module and, by the argument above, the map $E[\mathcal{T}]\downarrow\rightarrow(  (E[\mathcal{T}]\downarrow)^{\sim}){\downarrow}$ is an isomorphism of $L^0$-modules which is also a homeomorphism. 
 Then, Theorem \ref{thm: extensional} provides a name for a homeomorphism between $E[\mathcal{T}]$ and $(E[\mathcal{T}]\downarrow)^{\sim}$. A close look shows that all these correspondences are natural transformations.
 
\end{proof}

The important conclusion of the result above is not the equivalence of categories itself, but that, for any stable locally $L^0$-convex module $E[\mathscr{T}]$, we can find a tailored name  $\tilde{E}[\mathcal{T}]$ for a locally convex space such that $E[\mathscr{T}]$ is isomorphic to the descent $\tilde{E}[\mathcal{T}]\downarrow$. 
This will allow to reinterpret certain objects related to $E[\mathscr{T}]$ within the Boolean-valued universe.


Henceforth, we will fix a stable locally $L^0$-convex module $E[\mathscr{T}]$ and its corresponding name $\tilde{E}[\mathcal{T}]$ for a vector space given by the equivalence of categories above. 
Since $E[\mathscr{T}]$ and $\tilde{E}[\mathcal{T}]\downarrow$ are isomorphic stable locally $L^0$-convex modules and the properties that we will study are preserved by the isomorphism, for simplicity, we will assume w.l.o.g. that one recovers the initial $L^0$-module by means of the descent, that is, $E[\mathscr{T}]=\tilde{E}[\mathcal{T}]\downarrow$. 
For the same reasons, we will assume that $L^0=\R^{(\F)}\downarrow$. 
Let $\bar{L^0}$ denote the set of equivalence classes of $\Sigma$-measurable functions with values in $[-\infty,+\infty]$ and let $\overline{\R}^{(\F)}$ be a name for the extended real numbers. 
Clearly, we can also assume $\bar{L^0}=\overline{\R}^{(\F)}\downarrow$. 

Next, we will list different relevant objects related to $E[\mathscr{T}]$. 
All of them are either introduced in the existing literature of $L^0$-convex analysis \cite{Cheridito2012,kupper03,guo10,OZ2017stabil} or come from transcriptions in the modular setting of elements of conditional set theory \cite{DJKK13,L0compactness,OZ2017stabil}. 
Also, some of these concepts came earlier from Boolean valued analysis as we will explain later in Remark \ref{rem: notionsBVA}. 
Our purpose is to discuss their meanings within Boolean-valued analysis, providing a bunch of 'building blocks' for the construction of module analogues of known statements of locally convex analysis, which will be also true due to the transfer principle:  
 
\begin{itemize}
	\item \emph{Stable subsets}: For a given stable subset $S$ of $E$ we define the name $\tilde{S}:\mathcal{D}_S\rightarrow\F$ where $\mathcal{D}_S:=\left\{ \tilde{x}\colon x\in S\right\}$ and $\tilde{S}(\tilde{x}):=\Omega$. 
	Then, $\tilde{S}$ is a name for a subset of $\tilde{E}$ with $\tilde{S}\downarrow=S$. 
	Conversely, if $S_0$ is a name with $\llbracket\emptyset\neq S_0\subset\tilde{E}\rrbracket=\Omega$, then $S_0\downarrow$ is a stable subset of $E$ satisfying $\llbracket(S_0\downarrow)^\sim=S_0\rrbracket=\Omega$. 
	
	Moreover, $S$ is $L^0$-convex if, and only if, $\llbracket \textnormal{``}\tilde{S}\textnormal{ is convex''}\rrbracket=\Omega$; $S$ is $L^0$-absorbing if, and only if, $\llbracket\textnormal{``} \tilde{S}\textnormal{ is absorbing''}\rrbracket=\Omega$; and $S$ is $L^0$-balanced if, and only if, $\llbracket \textnormal{``}\tilde{S}\textnormal{ is balanced''}\rrbracket=\Omega$.

	\item \emph{Stable collections of subsets}: For a given stable collection $\mathscr{C}$ of subsets of $E$ we define the name $\tilde{\mathscr{C}}:\mathcal{D}_{\mathscr{C}}\rightarrow\F$ with $\mathcal{D}_{\mathscr{C}}:=\left\{ \tilde{S} \colon S\in\mathscr{C}\right\}$ and $\tilde{\mathscr{C}}(\tilde{S}):=\Omega$.

	Conversely, if $\mathcal{C}$ is a name for a non-empty collection of non-empty subsets of $\tilde{E}$, we define $\mathcal{C}\Downarrow:=\{ S\downarrow \colon S\in \mathcal{C}\downarrow \}$, which is a stable collection of subsets of $E$.
	
	Moreover, if $\mathscr{C}$ is a stable collection of subsets of $E$, one has that $\mathscr{C}=\tilde{\mathscr{C}}\Downarrow$; and if $\mathcal{C}$ is a name for a non-empty collection of non-empty subsets of $\tilde{E}$, then we have $\llbracket(\mathcal{C}\Downarrow)^\sim=\mathcal{C}\rrbracket=\Omega$.  
	
	In particular, we know from the proof of Theorem \ref{thm: connectII} that, if $\mathscr{U}$ is a neighborhood base of $0\in E$ as in Definition \ref{def: stabL0mod}, then $\tilde{\mathscr{U}}$ is a name for a neighborhood base of the origin and
	\begin{equation}
	\label{eq: neighBas}
	\tilde{\mathscr{U}}\Downarrow=\mathscr{U}.
	\end{equation} 
	
	Let $\mathscr{C}$ be a stable collection of subsets of $E$. We denote by $(\cup \tilde{\mathscr{C}})_\F$, $(\cap \tilde{\mathscr{C}})_\F$ names for the union and intersection of $\tilde{\mathscr{C}}$, respectively. 
	Then, it holds that 
	\begin{equation}
	\label{eq:cupcap}
	\cup\mathscr{C}=(\cup \tilde{\mathscr{C}})_\F\downarrow\quad\textnormal{ and }\quad\cap \mathscr{C}=(\cap \tilde{\mathscr{C}})_\F\downarrow.
	\end{equation} 
	
	\item \emph{Stable open subsets}: Let $O\subset E$ be stable. 
	It follows from the relation (\ref{eq: neighBas}), that $O$ is open if and only if, $\llbracket\textnormal{``}\tilde{O}\textnormal{ is open''}\rrbracket=\Omega$. 
	
	Moreover, for any stable subset $S$ of $E$, one has $\llbracket(\text{int}(S))^\sim=\text{int}(\tilde{S})\rrbracket=\Omega$.  
	
	\item \emph{Stable closed subsets}: Let $C\subset E$ be stable. 
	Then relation (\ref{eq: neighBas}) allows to show that $C$ is closed if, and only if, $\llbracket\textnormal{``}\tilde{C}\textnormal{ is closed''}\rrbracket=\Omega$.  
	
	Moreover, for any stable subset $S$ of $E$, it holds $\llbracket(\text{cl}(S))^\sim=\text{cl}(\tilde{S})\rrbracket=\Omega$. 
	
	\item  \emph{Stable filters}: A \emph{stable filter} on $E$ is a filter $\mathscr{F}$, which admits a filter base $\mathscr{B}$ which is a stable collection of subsets of $E$. 
	
	If $\mathscr{F}$ is a stable filter with base $\mathscr{B}$, where $\mathscr{B}$ is a stable collection, then it can be verified that $\llbracket\textnormal{``}\tilde{\mathscr{B}}\textnormal{ is a filter base''}\rrbracket=\Omega$. 
	Conversely, if $\mathcal{B}$ is a name for a filter base, then $\mathcal{B}\Downarrow$ is the base of some stable filter. 
	
	\item \emph{Stably compact subsets}: A stable subset $S$ of $E$ is said to be \emph{stably compact}, if every stable filter base $\mathscr{B}$ on $S$ has a cluster point in $S$. 
	
	We have that $K\subset E$ is stably compact if, and only if, $\llbracket\textnormal{``}\tilde{K}\textnormal{ is compact''}\rrbracket=\Omega$. 
	This follows because, due to (\ref{eq: neighBas}), a stable filter base $\mathscr{B}$ has a cluster point if, and only if,  $\llbracket\textnormal{``} \tilde{\mathscr{B}}\textnormal{ has a cluster point''}\rrbracket=\Omega$; and a name for a filter base $\mathcal{B}$ satisfies $\llbracket\textnormal{``}\mathcal{B}\textnormal{ has a cluster point''}\rrbracket=\Omega$ if, and only if, $\mathcal{B}\Downarrow$ has a cluster point.
	
	We will say that a stable subset $K$ of $E$ is \emph{relatively stably compact}, if $\textnormal{cl}(K)$ is stably compact. 
	Notice that $K\subset E$ is relatively stably compact if, and only if, $\llbracket\textnormal{``}\tilde{K}\textnormal{ is relatively compact''}\rrbracket=\Omega$.
	
	Just mention that it was proven in \cite[Proposition 5.2]{L0compactness} that, when the underlying probability space is atomless, then any Hausdorff stable locally $L^0$-convex module is anti-compact; that is, the only compact subsets are the finite subsets. 
	This means that the conventional compactness is not interesting because does not allow to establish any meaningful theorem. 
	On the other hand, the transfer principle brings a huge range of theorems involving stable compactness, which shows that stable compactness is by far much richer than classical compactness. 
	
	\item \emph{Stable functions}: Suppose that $S_1,S_2\subset E$ are stable. 
	A function $f:S_1\rightarrow S_2$ is said to be \emph{stable} if $f(\sum 1_{A_k}x_k)=\sum 1_{A_k}f(x_k)$ for all $\{x_k\}\subset S_1$ and $\{A_k\}\in p(\Omega)$.
	
	Since $\llbracket x=y\rrbracket\leq \llbracket f(x)=f(y)\rrbracket$ for all $x,y\in S_1$,  
	 Theorem \ref{thm: extensional} yields a name for a function $\tilde{f}$ between $\tilde{S}_1$ and $\tilde{S}_2$ such that 
	$\tilde{f}\downarrow=f$. 
	
	Moreover, it can be verified that $f$ is continuous if, and only if, $\llbracket\textnormal{``}\tilde{f}\textnormal{ is continuous''}\rrbracket=\Omega$.     
	
	A function $f:E\rightarrow \bar{L^0}$ has the \emph{local property}, if $1_A f(x)=1_A f(1_A x)$ for all $A\in\F$. 
	If $f$ has the local property, once again Theorem \ref{thm: extensional} allows to define a name $\tilde{f}$ of a function from $\tilde{E}$ to $\overline{\R}^{(\F)}$ so that $\tilde{f}\downarrow=f$.
	
	A function $f:E\rightarrow \bar{L^0}$ is:
\begin{enumerate}
	\item \emph{$L^0$-convex}: if $f(\eta x + (1-\eta) y)$ for all $\eta\in L^0$ with $0\leq\eta\leq 1$ and $x,y\in E$;
	\item \emph{proper}: if $f(x)>-\infty$ for all $x\in E$ and there is some $x_0\in E$ with $f(x_0)\in L^0$;
	\item \emph{lower semi-continuous}: if the sublevel $V_f(\eta):=\left\{x\in E \colon f(x)\leq\eta\right\}$
	is closed for every $\eta\in\bar{L^0}$.
\end{enumerate}
The \emph{domain} of $f$ is defined by $\textnormal{dom}(f):=\left\{ x\in E \colon f(x)\in L^0\right\}.$

	When $f$ has the local property, one has that $f$ is $L^0$-convex if, and only if $\llbracket\textnormal{``}\tilde{f}\textnormal{ is convex''}\rrbracket=\Omega$; and $f$ is proper if, and only if, $\llbracket\textnormal{``}\tilde{f}\textnormal{ is proper''}\rrbracket=\Omega$. 
 Further, it can be verified that $V_f(\eta)$ is a stable set for each $\eta\in L^0$ such that $V_f(\eta)\neq\emptyset$. 
Thus, we can conclude that, $f$ is lower semi-continuous  if, and only if, $\llbracket\textnormal{``}\tilde{f}\textnormal{ is lower semi-continuous''}\rrbracket=\Omega$. 
	
	Finally, just mention that if $f$ is $L^0$-convex, then $f$ has automatically the local property (see \cite[Theorem 3.2]{kupper03}), hence in the statements we will not have to require the latter property.
	
	\item \emph{Topological dual}: We consider $E^\ast:=E^\ast[\mathscr{T}]$ the set of all continuous $L^0$-module morphisms $\mu:E\rightarrow L^0$. 
	Then, we can consider the name $F:\mathcal{D}_{E^\ast}\rightarrow\F$ with $\mathcal{D}_{E^\ast}:=\{ \tilde{\mu} \colon \mu\in E^\ast\}$ and $F(\tilde{\mu}):=\Omega$. 
	Then we have $\llbracket F=\tilde{E}^\ast[\mathcal{T}]\rrbracket=\Omega$, where $\tilde{E}^\ast[\mathcal{T}]$ denotes a  name for the topological dual of $\tilde{E}[\mathcal{T}]$. 
	Moreover, note that we have the relation $E^\ast[\mathscr{T}]=\{\mu\downarrow \colon \mu\in\tilde{E}^\ast[\mathcal{T}]\downarrow\}$. 
	
	\item \emph{Stable sequences}: A net $\chi=\{x_{\mathfrak{n}}\}_{\mathfrak{n}\in L^0(\N)}$ in $E$ is called a \emph{stable sequence} whenever $x_{\mathfrak{n}}=\sum_{k\in\N}1_{\{\mathfrak{n}=k\}}x_k$ for all $\mathfrak{n}\in L^0(\N)$. 
	Then, again, Theorem \ref{thm: extensional} provides us with a name $\tilde{\chi}$ for a function from $\N^{(\F)}$ to $\tilde{E}$; that is, a name for a sequence in $\tilde{E}$. Besides, we have $\tilde{\chi}\downarrow=\chi$. 
	
	Bearing in mind relation (\ref{eq: neighBas}), it can be verified that the net $\chi$ converges to $x\in E$ if, and only if, $\llbracket\textnormal{``}\tilde{\chi}\textnormal{ converges to }\tilde{x}\in\tilde{E}\textnormal{''}\rrbracket=\Omega$.    
	
	A stable sequence $\kappa=\{y_{\mathfrak{n}}\}_{\mathfrak{n}\in L^0(\N)}\subset E$ is called a \emph{stable subsequence} of $\chi=\{x_{\mathfrak{n}}\}_{\mathfrak{n}\in L^0(\N)}$ if there exists a stable sequence $\{\mathfrak{n}_{\mathfrak{m}}\}_{\mathfrak{m}\in L^0(\N)}\subset L^0(\N)$, with ${\mathfrak{n}}_{\mathfrak{m}}<\mathfrak{n}_{\mathfrak{m}^\prime}$ whenever $\mathfrak{m}<\mathfrak{m}^\prime$, such that $y_{\mathfrak{m}}=x_{\mathfrak{n}_{\mathfrak{m}}}$ for all $\mathfrak{m}\in L^0(\N)$. 
	In this case, it can be verified that $\llbracket \textnormal{``}\tilde{\kappa}\textnormal{ is a subsequence of }\tilde{\chi}\textnormal{''}\rrbracket=\Omega$. 
	
	\item \emph{$L^0$-norms}: An \emph{$L^0$-norm} on $E$ is a function $\Vert\cdot\Vert:E\rightarrow L^0$ such that for all $x,y\in E$ and $\eta\in L^0$ satisfies: 
	\begin{itemize}
		\item[(i)] $\Vert x\Vert\geq 0$, with $\Vert x\Vert=0$ if and only if $x=0$;
		\item[(ii)] $\Vert \eta x\Vert=|\eta|\Vert x\Vert$;
		\item[(iii)] $\Vert x+y\Vert\leq\Vert x\Vert+\Vert y\Vert$.
	\end{itemize}
	In this case $(E,\Vert\cdot\Vert)$  is called an $L^0$-\emph{normed module}. 
	
	The collection of sets $B_\varepsilon:=\{x\in E\colon \Vert x\Vert<\varepsilon\}$, where $\varepsilon\in L^0$ with $\varepsilon>0$, is a neighborhood base of $0\in E$ for a stable locally convex topology $\mathscr{T}$. 
	
	Due to (ii), $\Vert\cdot\Vert$ has the local property. Then we can define $\Vert\cdot\Vert^\sim$, which is a name for a norm on $\tilde{E}$. 
	Furthermore, one has that $\llbracket\textnormal{``}\Vert\cdot\Vert^\sim\textnormal{ induces }\mathcal{T}\textnormal{''}\rrbracket=\Omega$.   
	
%
	
	\item \emph{Stable completeness}: 
	Suppose that $(E,\Vert\cdot\Vert)$ is an $L^0$-normed module. 
	A stable sequence $\{x_{\mathfrak{n}}\}_{\mathfrak{n}\in L^0(\N)}\subset E$ is said to be \emph{Cauchy} if for every $\varepsilon\in L^0$, $\varepsilon>0$, there exists $\mathfrak{n}_0\in L^0(\N)$ such that $\Vert x_{\mathfrak{n}}-x_{\mathfrak{n}^\prime}\Vert\leq \varepsilon$ for all $\mathfrak{n},\mathfrak{n}^\prime\in L^0(\mathbb{N})$ with $\mathfrak{n},\mathfrak{n}^\prime\geq \mathfrak{n}_0$.
	
	We say that $(E,\Vert\cdot\Vert)$ is \emph{stably complete}, if every Cauchy stable sequence is convergent.
	
	Then, $(E,\Vert\cdot\Vert)$ is stably complete if, and only if, $\llbracket\textnormal{``}(\tilde{E},\Vert\cdot\Vert^\sim)\textnormal{ is a Banach space''}\rrbracket=\Omega$.

	\item \emph{Stable weak topologies}: The collection of sets
	\[
	U_{\{F_k\},\{A_k\},\varepsilon}:=\{x\in E\colon \sum 1_{A_k}\underset{\mu\in F_k}\esssup|\mu(x)|<\varepsilon\},
	\]
	where $\{A_k\}\in p(\Omega)$, $\{F_k\}$ is a countable collection of non-empty finite subsets of $E^\ast$ and $\varepsilon\in L^0$ with $\varepsilon>0$, is a neighborhood base of $0\in E$ for a stable locally $L^0$-convex topology, which is called the \emph{stable weak topology} and is denoted by $\sigma_s(E,E^\ast)$. 
	
	Then the corresponding name for a locally convex topology provided by the equivalence of categories in Theorem \ref{thm: connectII} is precisely a name for the weak topology of $\tilde{E}[\mathcal{T}]$.
	
	Analogously, we can define the \emph{stable weak-$\ast$ topology} $\sigma_s(E^\ast,E)$.
\end{itemize}

\begin{rem}\label{rem: notionsBVA}
As mentioned previously, some of the notions listed above  were introduced earlier in literature of Boolean-valued analysis under different nomenclature. 
\emph{Stable compactness} was formulated in \cite{L0compactness} as a transcription of the notion of \emph{conditional compactness} introduced in \cite{DJKK13}. 
However, stable compactness was first time studied by Kusraev~\cite{cyclicCompactness} giving rise to the notion of \emph{cyclic compact set}. 
Later, the notion of \emph{mix-compactness} was introduced by Gutman and Lisovskaya \cite{gutman2009boundedness}.     
It turns out that \emph{cyclic compactness} and \emph{mix-compactness} are equivalent notions (see~\cite[Theorem 2.12.C.5]{kusraev2014boolean}). 
These types of compactness have been fruitfully exploited, see for instance results in \cite[Sections 1.3 and 1.4]{kusraev1985vector},  \cite[Section 8.5]{kusraev2000dominated} and the analogues of the boundedness and uniform boundedness  principles  obtained in \cite{gutman2009boundedness}.

The notion of \emph{stable completeness} is  a transcription of the notion of \emph{conditional completeness} introduced in \cite{DJKK13}. 
Descents of complete spaces and Banach spaces were studied earlier by Kusraev \cite{kusraev1985banach}, originating the notion of Banach-Kantorovich space, which are descents of real Banach spaces as proven in \cite{kusraev1985banach} (for further details see~\cite[Section 8.3]{kusraev2000dominated} and \cite[Section 5.4]{kusraev2012boolean}). 

Finally, the \emph{stably weak and stably weak-$\ast$ topologies} defined above are transcriptions of the notion of \emph{conditional initial topology} induced by \emph{conditional dual pairs} introduced in \cite{DJKK13} applied to the pairing $\langle E,E^\ast\rangle$. 
Descents of dual pairs, which give rise to dual systems with $\R^{(\A)}\downarrow$-bilinear forms, were studied earlier in \cite{kusraev1985vector}. In particular, \cite[Theorem 3.3.10(b)]{kusraev1985vector} is related to Theorem \ref{thm: connectII}. 
This type of pairings covers the \emph{stably weak and stably weak-$\ast$ topologies} defined above in the more general framework of modules over universally complete ring lattices.
\end{rem}

Once we have the 'building blocks', let us see some examples to exhibit how they can be assembled to give rise to different statements. 
Of course, this list is not exhaustive and we can create many other pieces for our puzzle.
 
Let us start by the main theorems of \cite{kupper03}. 
For instance, we will see that Theorems 2.8, 3.7 and 3.8 follow from the transfer principle of Boolean-valued models. 

Although, these results apply to the more general structure of locally $L^0$-convex module, they are proved under the assumption that the locally $L^0$-convex topology is induced by a family of $L^0$-seminorms (see \cite[Definition 2.3]{kupper03}), which is closed under  finite suprema and with the so-called \emph{countable concatenation property}.\footnote{Here, we refer to the countable concatenation property for families of $L^0$-seminorms, which has not to be missed up with the algebraic countable concatenation property introduced at the beginning of the section.} 
It is not difficult to prove that these properties amount to the existence of a neighborhood base $\mathscr{U}$ of $0\in E$ as in Definition \ref{def: stabL0mod}. Thus, these results implicitly apply to stable locally $L^0$-convex modules.  


We have the following:\footnote{This statement is more general than \cite[Theorem 2.8]{kupper03} as the latter applies to the particular case in which $S_1$ is a singleton. This statement is also a transcription of \cite[Theorem 5.5(ii)]{DJKK13} as shown in \cite{L0compactness}.}  

\begin{thm}\label{thm: separation}
	Let $E[\mathscr{T}]$ be a  stable locally $L^0$-convex module, and suppose that $S_1,S_2$ are stable and $L^0$-convex subsets with $S_1$ stably compact and $S_2$ closed. 
	If
	$$1_A S_1\cap 1_A S_2=\emptyset\quad\textnormal{ for all }A\in\F\textnormal{ with }A>\emptyset,$$  
	then there exists a continuous $L^0$-module morphism $\mu:E\rightarrow L^0$ and $\varepsilon\in L^0$, $\varepsilon>0$, such that 
	$$\mu(x)>\mu(y)+\varepsilon\quad\textnormal{ for all }x\in S_1,y\in S_2.$$
\end{thm}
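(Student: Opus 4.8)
The plan is to transfer the classical Hahn--Banach separation theorem for locally convex spaces---where a compact convex set and a disjoint closed convex set can be strictly separated by a continuous linear functional---into the Boolean-valued universe $\VF$, and then descend. First I would set up the names: by the standing assumption $E[\mathscr{T}]=\tilde{E}[\mathcal{T}]\downarrow$ with $\tilde{E}[\mathcal{T}]$ a name for a locally convex space, and by the dictionary of building blocks, $\tilde{S}_1,\tilde{S}_2$ are names for subsets of $\tilde{E}$ with $\tilde{S}_i\downarrow=S_i$, each satisfying $\llbracket\textnormal{``}\tilde{S}_i\textnormal{ is convex''}\rrbracket=\Omega$, with moreover $\llbracket\textnormal{``}\tilde{S}_1\textnormal{ is compact''}\rrbracket=\Omega$ (by the stable compactness correspondence) and $\llbracket\textnormal{``}\tilde{S}_2\textnormal{ is closed''}\rrbracket=\Omega$ (by the stable closedness correspondence).

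The crucial step is to show that the hypothesis $1_A S_1\cap 1_A S_2=\emptyset$ for all $A>\emptyset$ translates to $\llbracket\tilde{S}_1\cap\tilde{S}_2=\emptyset\rrbracket=\Omega$. Suppose not; then $a:=\llbracket\tilde{S}_1\cap\tilde{S}_2\neq\emptyset\rrbracket>\emptyset$, and by the Maximum Principle there is a name $u$ with $\llbracket u\in\tilde{S}_1\wedge u\in\tilde{S}_2\rrbracket\geq a$ (one restricts attention below $a$ using the mixing principle to fill in an arbitrary element off $a$). Taking the representative $\tilde{u}\in\overline{V}^{(\F)}$ and using $\llbracket x\in\tilde{S}_i\rrbracket=\bigvee\{A: 1_A x\in 1_A S_i\text{ in the obvious sense}\}$ together with stability of $S_1,S_2$, one produces an element $x\in E$ with $1_a x\in 1_a S_1\cap 1_a S_2$, contradicting the hypothesis with $A=a$. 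I expect this to be the main obstacle: one must carefully argue, via a maximal disjoint family and the countable concatenation property (exactly the scheme alluded to in the footnote after Theorem~\ref{thm: connectII}), that membership "on a nonzero piece" genuinely reflects the Boolean value, so that a strictly positive truth value of the intersection forces a genuine common point on some $A>\emptyset$.

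Once $\llbracket(\tilde{S}_1\neq\emptyset)\wedge(\tilde{S}_2\neq\emptyset)\wedge(\tilde{S}_1\cap\tilde{S}_2=\emptyset)\wedge\textnormal{``}\tilde{S}_1,\tilde{S}_2\textnormal{ convex''}\wedge\textnormal{``}\tilde{S}_1\textnormal{ compact''}\wedge\textnormal{``}\tilde{S}_2\textnormal{ closed''}\rrbracket=\Omega$ is established, the Transfer Principle applied to the classical strong separation theorem gives a name $\nu$ for a continuous linear functional $\nu:\tilde{E}\to\R^{(\F)}$ and a name $\delta$ for a real number with $\llbracket\delta>0\rrbracket=\Omega$ such that
$$\llbracket\forall x\in\tilde{S}_1\,\forall y\in\tilde{S}_2\;(\nu(x)>\nu(y)+\delta)\rrbracket=\Omega.$$
(Strictly, transfer yields the existential statement with truth value $\Omega$; the Maximum Principle then produces witnesses $\nu,\delta$ with that value.) Now descend: set $\mu:=\nu\downarrow:E\to L^0$, which by the topological dual correspondence is a continuous $L^0$-module morphism, and $\varepsilon:=\phi(\delta)\in L^0$ with $\varepsilon>0$ since $\llbracket\delta>0\rrbracket=\Omega$. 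For $x\in S_1$ and $y\in S_2$ we have $\tilde{x}\in\tilde{S}_1\downarrow$-type membership, so $\llbracket\nu(\tilde{x})>\nu(\tilde{y})+\delta\rrbracket=\Omega$, and since $\llbracket r\leq s\rrbracket=\bigvee\{A:1_A\phi(r)\leq1_A\phi(s)\}$ this yields $\mu(x)>\mu(y)+\varepsilon$ in $L^0$. This completes the proof; the only genuinely delicate point, as noted, is the faithful translation of the "$1_A$-disjointness" hypothesis into a Boolean value equal to $\Omega$ for the emptiness of the intersection of the names.
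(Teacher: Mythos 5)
Your proposal is correct and follows exactly the route the paper intends: the paper gives no detailed argument, stating only that the theorem is a reformulation of $\llbracket \text{separation theorem}\rrbracket=\Omega$ and hence needs no proof, while you supply the verification that the hypotheses (in particular the $1_A$-disjointness condition, handled via the maximal-disjoint-family/stability scheme) have Boolean truth value $\Omega$ before invoking transfer, the maximum principle, and descent. This is the same approach, just carried out in full detail.
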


Remember the classical separation theorem: If $C,K$ are non-empty convex subsets with $C$ closed, $K$ compact, and $C$ and $K$ have empty intersection, then there is a lineal functional that separates $C$ from $K$. 
What we have above is just a reformulation of the statement $\llbracket \text{separation theorem}\rrbracket = \Omega$, so no proof needed.\\ 

In literature, there is a long tradition of studying \emph{conjugates} and \emph{subgradients} of functions taking values in different types of ordered lattice rings such as Kantorovich spaces (see eg~\cite[chap. 4]{kusraev2012subdifferentials}), and addressing versions of the classical Fenchel-Moreau theorem in these settings (see eg~\cite[Theorem 4.3.10(1)]{kusraev2012subdifferentials} and \cite[Theorem 1.2.11]{kusraev1985vector}).    
More recently, Filipovic et al~\cite{kupper03} worked with versions of conjugates and subgradients for $\bar{L^0}$-valued functionals defined on $L^0$-modules.   
Namely, the \emph{conjugate} of a function $f:E\rightarrow\bar{L^0}$ is defined by
\[
f^\ast:E^\ast\rightarrow\bar{L^0},\quad f^\ast(\mu):=\underset{x\in E}\esssup(\mu(x)-f(x)),
\]
and its \emph{biconjugate} is defined by
\[
f^{\ast\ast}:E\rightarrow\bar{L^0},\quad f^{\ast\ast}(x):=\underset{\mu\in E^\ast}\esssup(\mu(x)-f^\ast(\mu)).
\] 
An element $\mu\in E^\ast$ is a \emph{subgradient} of $f:E\rightarrow\bar{L^0}$ at $x_0\in\text{dom}(f)$, if 
\[
\mu(x-x_0)\leq f(x)-f(x_0)\quad\text{ for all }x\in E.
\] 
The set $\partial f(x_0)$ stands for the set of all subgradients of $f$ at $x_0$. 

The notion of $L^0$-barrel was introduced in \cite{kupper03}. 
Namely, a subset $S$ of $E$ is an \emph{$L^0$-barrel} if it is $L^0$-convex, $L^0$-absorbing, $L^0$-balanced and closed. 
We will say that a topological $L^0$-module is \emph{stably barreled} if every stable $L^0$-barrel is a neighborhood of $0\in E$.\\

\cite[Theorem 3.8]{kupper03} is a module analogue of the classical Fenchel-Moreau theorem.  
We have the following statement, which does not need a proof as it follows from its conventional version \cite[Theorem 2.22]{barbu2012convexity} by means of the transfer principle by just noting that $\llbracket \tilde{f}^{\ast\ast}=(f^{\ast\ast})^\sim\rrbracket=\Omega$:  

\begin{thm}\label{thm: fenchel}
	Let $E[\mathscr{T}]$ be a stable locally $L^0$-convex module and let $f:E\rightarrow\bar{L^0}$ be proper lower semi-continuous and $L^0$-convex. 
	Then $f^{\ast\ast}=f$.
\end{thm}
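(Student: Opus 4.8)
The plan is to derive the statement from the classical Fenchel--Moreau theorem \cite[Theorem 2.22]{barbu2012convexity}, a theorem of ZFC asserting that a proper, lower semi-continuous, convex, extended-real-valued function on a locally convex space equals its biconjugate. First I would pass to the name side. Since $f$ is $L^0$-convex it has the local property by \cite[Theorem 3.2]{kupper03}, so the dictionary of building blocks above produces a name $\tilde f$ for a function $\tilde E\to\overline{\R}^{(\F)}$ with $\tilde f\downarrow=f$; and, again by that dictionary, ``$f$ proper'', ``$f$ lower semi-continuous'' and ``$f$ $L^0$-convex'' translate respectively into $\llbracket\textnormal{``}\tilde f\textnormal{ is proper''}\rrbracket=\llbracket\textnormal{``}\tilde f\textnormal{ is lower semi-continuous''}\rrbracket=\llbracket\textnormal{``}\tilde f\textnormal{ is convex''}\rrbracket=\Omega$. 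Hence the internal statement ``$\tilde f$ is proper, l.s.c.\ and convex on the locally convex space $\tilde E[\mathcal T]$'' has Boolean value $\Omega$, and applying the transfer principle to \cite[Theorem 2.22]{barbu2012convexity} gives $\llbracket \tilde f^{\ast\ast}=\tilde f\rrbracket=\Omega$, where $\tilde f^{\ast\ast}$ is the biconjugate of $\tilde f$ computed inside $\VF$ relative to the name $\tilde E^\ast[\mathcal T]$ for the topological dual.

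Next, and this is the heart of the matter, I would check that the internally computed biconjugate descends to the externally computed one, i.e. $\llbracket \tilde f^{\ast\ast}=(f^{\ast\ast})^\sim\rrbracket=\Omega$. The dictionary gives $\llbracket F=\tilde E^\ast[\mathcal T]\rrbracket=\Omega$ and $E^\ast=\{\mu\downarrow \colon \mu\in\tilde E^\ast\downarrow\}$, so the elements $\mu$ of $E^\ast$ correspond to the elements of $\tilde E^\ast\downarrow$, with $\llbracket \tilde\mu(\tilde x)=(\mu(x))^\sim\rrbracket=\Omega$ for all $x\in E$. Under Takeuti's correspondence $\bar{L^0}=\overline{\R}^{(\F)}\downarrow$ the essential supremum of a family of elements of $\bar{L^0}$ is precisely the descent of the internal supremum of the corresponding family in $\overline{\R}^{(\F)}$ \cite{takeuti2015two,gordon}; combining this with the bijection $E\leftrightarrow\tilde E\downarrow$, so that the external index range over $E$ is matched, via the mixing principle, by the internal index range over $\tilde E$, I would obtain, for any $g\colon E\to\bar{L^0}$ with the local property, $\llbracket\, \sup_{x\in\tilde E}\tilde g(x)=(\esssup_{x\in E}g(x))^\sim\,\rrbracket=\Omega$. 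Applying this with $g(\mu):=\mu(x)-f(x)$ (which has the local property because $f$ does) yields $\llbracket \tilde f^\ast(\tilde\mu)=(f^\ast(\mu))^\sim\rrbracket=\Omega$ for every $\mu\in E^\ast$, hence $\llbracket \tilde f^\ast=(f^\ast)^\sim\rrbracket=\Omega$; running the same computation once more, now supremizing over $\mu\in E^\ast\leftrightarrow\tilde E^\ast$, gives $\llbracket \tilde f^{\ast\ast}=(f^{\ast\ast})^\sim\rrbracket=\Omega$.

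Finally I would combine the two conclusions into $\llbracket (f^{\ast\ast})^\sim=\tilde f^{\ast\ast}=\tilde f\rrbracket=\Omega$. Since $f^{\ast\ast}$ is $L^0$-convex (a pointwise essential supremum of $L^0$-affine functions) it has the local property, so $(f^{\ast\ast})^\sim$ is a legitimate name with $(f^{\ast\ast})^\sim\downarrow=f^{\ast\ast}$, while $\tilde f\downarrow=f$; taking descents in this equality of names with Boolean value $\Omega$ therefore forces $f^{\ast\ast}=f$, which is the assertion. The only genuinely delicate step is the middle paragraph: matching the externally defined conjugate --- an essential supremum of an $\bar{L^0}$-valued family indexed by the module $E$ (resp.\ its dual) --- with the descent of the internal conjugate, which requires the identification of $\esssup$ in $L^0$ with the descent of the internal $\sup$, the bijections $E\leftrightarrow\tilde E\downarrow$ and $E^\ast\leftrightarrow\tilde E^\ast\downarrow$ together with the mixing principle, and the compatibility $\llbracket \tilde\mu(\tilde x)=(\mu(x))^\sim\rrbracket=\Omega$ recorded among the building blocks. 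Everything else is a routine invocation of the transfer principle.
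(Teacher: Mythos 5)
Your proposal is correct and follows exactly the route the paper takes: the paper gives no proof beyond remarking that the statement follows from the classical Fenchel--Moreau theorem \cite[Theorem 2.22]{barbu2012convexity} via the transfer principle ``by just noting that $\llbracket \tilde{f}^{\ast\ast}=(f^{\ast\ast})^\sim\rrbracket=\Omega$''. Your middle paragraph simply fills in the verification of that identity (identification of $\esssup$ with the descent of the internal supremum, the bijections $E\leftrightarrow\tilde E\downarrow$ and $E^\ast\leftrightarrow\tilde E^\ast\downarrow$, and the mixing principle), which the paper leaves implicit.
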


Concerning subgradients, we have the following result, which is a generalization of \cite[Theorem 3.7]{kupper03} and follows from the transfer principle applied to the so-called Fenchel-Rockafellar theorem, see eg~\cite[Theorem 1]{borwein2006variational}:

\begin{thm}\label{thm: subGrad}
	Let $E[\mathscr{T}]$ be a stable locally $L^0$-convex module which is stably barreled. 
	Let $f:E\rightarrow\bar{L^0}$ be a proper lower semicontinuous $L^0$-convex function. Then, 
	\[
	\partial f(x)\neq \emptyset\quad\text{ for all }x\in\textnormal{int}(\textnormal{dom}(f)).
	\]
\end{thm}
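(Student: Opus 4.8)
The natural approach is to push the whole statement into $\VF$ and let the transfer principle do the work, exactly as was done for Theorems \ref{thm: separation} and \ref{thm: fenchel}. Since an $L^0$-convex function automatically has the local property \cite[Theorem 3.2]{kupper03}, the dictionary assembled above provides a name $\tilde f$ for a function from $\tilde E$ to $\overline{\R}^{(\F)}$ with $\tilde f\downarrow=f$, and the hypotheses on $f$ translate to $\llbracket\textnormal{``}\tilde f\textnormal{ is proper''}\rrbracket=\llbracket\textnormal{``}\tilde f\textnormal{ is convex''}\rrbracket=\llbracket\textnormal{``}\tilde f\textnormal{ is lower semi-continuous''}\rrbracket=\Omega$. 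The first genuine task is to translate the remaining hypothesis, namely that $E[\mathscr T]$ be stably barreled, into $\llbracket\textnormal{``}\tilde E[\mathcal T]\textnormal{ is barreled''}\rrbracket=\Omega$. Here I would argue that a stable subset $S\subset E$ is an $L^0$-barrel precisely when $\llbracket\textnormal{``}\tilde S\textnormal{ is a barrel''}\rrbracket=\Omega$ (combining the already listed translations of $L^0$-convex, $L^0$-absorbing, $L^0$-balanced and closed), and that $S$ is a neighborhood of $0$ iff $\llbracket\textnormal{``}\tilde S\textnormal{ is a neighborhood of }0\textnormal{''}\rrbracket=\Omega$ by \eqref{eq: neighBas}. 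One implication is then immediate; for the converse one uses the standard mixing trick: given an arbitrary name $B$ with boolean value $a=\llbracket\textnormal{``}B\textnormal{ is a barrel''}\rrbracket$, mix $B$ on $a$ with the (internally trivially barreled) whole space $\tilde E$ on $a^c$, getting a name $B'$ with $\llbracket\textnormal{``}B'\textnormal{ is a barrel''}\rrbracket=\Omega$; then $B'\downarrow$ is a stable $L^0$-barrel, hence a neighborhood of $0$, and since $\llbracket B=B'\rrbracket\geq a$ one gets $a\leq\llbracket\textnormal{``}B'\textnormal{ is a neighborhood of }0\textnormal{''}\rrbracket\wedge\llbracket B=B'\rrbracket\leq\llbracket\textnormal{``}B\textnormal{ is a neighborhood of }0\textnormal{''}\rrbracket$, which is exactly what is needed to conclude $\llbracket\textnormal{``}\tilde E\textnormal{ is barreled''}\rrbracket=\Omega$.

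Next I would translate the conclusion. Using the local property of $f$ one checks that $\textnormal{dom}(f)$ is a stable set, so $\llbracket(\textnormal{dom}(f))^\sim=\textnormal{dom}(\tilde f)\rrbracket=\Omega$, and since interiors transfer ($\llbracket(\textnormal{int}(S))^\sim=\textnormal{int}(\tilde S)\rrbracket=\Omega$ for stable $S$), a point $x\in E$ lies in $\textnormal{int}(\textnormal{dom}(f))$ exactly when $\llbracket\tilde x\in\textnormal{int}(\textnormal{dom}(\tilde f))\rrbracket=\Omega$. Likewise, recalling that $E^\ast=\{\mu\downarrow\colon\mu\in\tilde E^\ast[\mathcal T]\downarrow\}$, that $\partial f(x_0)$ is stable (again by the local property, which makes the defining subgradient inequality descend partition-wise), one sees that $\partial f(x)\neq\emptyset$ in $V$ if and only if $\llbracket\partial\tilde f(\tilde x)\neq\emptyset\rrbracket=\Omega$.

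With every ingredient now phrased inside $\VF$, I would invoke transfer: the classical Fenchel–Rockafellar subdifferentiability theorem --- a proper lower semi-continuous convex function on a barreled locally convex space is subdifferentiable at every point of the interior of its domain, see e.g.\ \cite[Theorem 1]{borwein2006variational} --- is a theorem of ZFC, so $\llbracket\textnormal{``}\forall x\in\textnormal{int}(\textnormal{dom}(\tilde f))\ \partial\tilde f(x)\neq\emptyset\textnormal{''}\rrbracket=\Omega$. Specializing at the fixed $\tilde x$ gives $\llbracket\partial\tilde f(\tilde x)\neq\emptyset\rrbracket=\Omega$, and the maximum principle furnishes a name $\nu$ with $\llbracket\nu\in\partial\tilde f(\tilde x)\rrbracket=\Omega$; in particular $\llbracket\nu\in\tilde E^\ast\rrbracket=\Omega$, whence $\mu:=\nu\downarrow\in E^\ast$, and descending the subgradient inequality $\llbracket\forall y\,(\nu(y-x)\leq\tilde f(y)-\tilde f(x))\rrbracket=\Omega$ yields $\mu(y-x)\leq f(y)-f(x)$ for all $y\in E$, i.e.\ $\mu\in\partial f(x)$.

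The step I expect to cost the most work is the translation of \emph{stably barreled} into \emph{internally barreled}; everything else is a routine application of the dictionary above together with the maximum and mixing principles. One should be mildly careful here that the classical theorem quantifies over \emph{all} barrels and \emph{all} continuous linear functionals of $\tilde E$, while the modular notions only register the stable ones --- this is precisely the reason the mixing argument in the first paragraph, identifying each internal barrel with one of the form $\tilde S$, is required, and the same remark applies when reading off $\partial f(x)$ from $\partial\tilde f(\tilde x)$.
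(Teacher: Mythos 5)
Your proposal is correct and follows exactly the route the paper intends: the paper offers no detailed argument beyond the remark that the theorem ``follows from the transfer principle applied to the Fenchel--Rockafellar theorem,'' and your write-up is precisely that argument carried out with the dictionary of building blocks, the maximum principle, and the descent of the subgradient inequality. The one step the paper leaves entirely implicit --- that \emph{stably barreled} yields $\llbracket\textnormal{``}\tilde E[\mathcal T]\textnormal{ is barreled''}\rrbracket=\Omega$, which as you note requires the mixing trick to reduce an arbitrary internal barrel to one of the form $\tilde S$ --- is handled correctly in your first paragraph.
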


The notion of \emph{$L^0$-barreled} topological $L^0$-module	was introduced in \cite{kupper03}; namely, $E[\mathscr{T}]$ is $L^0$-barreled if every $L^0$-barrel is a neighborhood of $0\in E$. 
	Thus, the notion of stably barreled topological $L^0$-module is more general. 
	This was already pointed out in \cite{guo2015random1}, where the statement above was already proven by using the techniques introduced in \cite{kupper03}.\\

Let us see more examples of application of our method.   
Next, we provide module analogues of the classical James' compactness theorem and also a version of the important Brouwer fixed point theorem.

The following statement is a modular version of a non-linear variation of classical James' compactness theorem, which plays an important role in the study of robust representation of risk measures (see eg~\cite[Theorem A.1]{jouini2006law} and \cite[Theorem 2]{orihuela2012coercive}). 
The statement we present follows from the transfer principle applied to its most general version \cite[Theorem 2.4]{saint2013weak}.

\begin{thm}\label{thm: nonLinJames}
	Let $(E,\Vert\cdot\Vert)$ be a stably complete $L^0$-normed module and let $f:E\rightarrow \bar{L^0}$ be a proper function with the local property. 
	If for every $\mu\in E^\ast$ there is an $x_0$ such that $\mu(x_0)-f(x_0)=f^\ast(\mu)$, then the set $V_f(\eta)=\{x \in E \colon f(x)\leq \eta\}$ is relatively stably compact w.r.t. $\sigma_s(E,E^\ast)$ for every $\eta\in L^0$ with $V_f(\eta)\neq\emptyset$. 
\end{thm}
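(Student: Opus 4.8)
The plan is to run the machinery assembled above: transcribe the data into a name living in $\VF$, apply the transfer principle to the conventional nonlinear James theorem \cite[Theorem 2.4]{saint2013weak}, and descend the conclusion. Let $(\tilde{E},\Vert\cdot\Vert^\sim)$ be the name for a Banach space attached to $(E,\Vert\cdot\Vert)$ by Theorem \ref{thm: connectII} (here stable completeness enters), and let $\tilde{f}\colon\tilde{E}\to\overline{\R}^{(\F)}$ be the name produced from $f$ by Theorem \ref{thm: extensional} — the local property of $f$ is precisely what makes this construction available. From the dictionary of building blocks I would record the following translations: $f$ is proper iff $\llbracket\textnormal{``}\tilde{f}\textnormal{ is proper''}\rrbracket=\Omega$; the descent of the name $\tilde{E}^\ast$ for the dual is, under the identification $\mu\leftrightarrow\tilde\mu$, exactly $E^\ast$; the name $\tilde{f}^\ast$ for the internal conjugate of $\tilde{f}$ satisfies $\tilde{f}^\ast\downarrow=f^\ast$; the sublevel name $V_{\tilde{f}}(\tilde\eta)$ has $V_{\tilde{f}}(\tilde\eta)\downarrow=V_f(\eta)$ for $\eta\in L^0$; and $\sigma_s(E,E^\ast)$ is the stable topology attached by Theorem \ref{thm: connectII} to a name for the weak topology of $\tilde{E}$, so that a stable set $K$ is relatively stably compact for $\sigma_s(E,E^\ast)$ iff $\llbracket\textnormal{``}\tilde{K}\textnormal{ is relatively weakly compact''}\rrbracket=\Omega$.

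Next I would transcribe the attainment hypothesis. Fix an arbitrary $\nu\in\tilde{E}^\ast\downarrow$; its descent is some $\mu\in E^\ast$, and by hypothesis there is $x_0\in E$ with $\mu(x_0)-f(x_0)=f^\ast(\mu)$ in $\bar{L^0}$. Reading this equality through the dictionary (in particular $\tilde{f}^\ast\downarrow=f^\ast$ and Takeuti's correspondence for $=$) gives $\llbracket \nu(\tilde{x}_0)-\tilde{f}(\tilde{x}_0)=\tilde{f}^\ast(\nu)\rrbracket=\Omega$, hence $\llbracket\exists x\in\tilde{E}\colon \nu(x)-\tilde{f}(x)=\tilde{f}^\ast(\nu)\rrbracket=\Omega$. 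Since this holds for every $\nu\in\tilde{E}^\ast\downarrow$, and a formula universally quantified over a name has Boolean value $\Omega$ exactly when it is forced at every point of the descent (a standard consequence of the mixing principle), we obtain
\[
\llbracket\,\forall\mu\in\tilde{E}^\ast\ \exists x\in\tilde{E}\colon\ \mu(x)-\tilde{f}(x)=\tilde{f}^\ast(\mu)\,\rrbracket=\Omega .
\]
Together with $\llbracket\textnormal{``}\tilde{f}\textnormal{ is proper''}\rrbracket=\Omega$ and $\llbracket\textnormal{``}(\tilde{E},\Vert\cdot\Vert^\sim)\textnormal{ is a Banach space''}\rrbracket=\Omega$, these are exactly the hypotheses of \cite[Theorem 2.4]{saint2013weak} read inside $\VF$, whose statement has Boolean value $\Omega$ by the transfer principle.

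Then I would read off the conclusion. Transfer yields
\[
\llbracket\,\forall\eta\in\R^{(\F)}\ \big(V_{\tilde{f}}(\eta)\neq\emptyset\ \Rightarrow\ V_{\tilde{f}}(\eta)\textnormal{ is relatively weakly compact}\big)\,\rrbracket=\Omega .
\]
Now fix $\eta\in L^0=\R^{(\F)}\downarrow$ with $V_f(\eta)\neq\emptyset$ and pick $x\in V_f(\eta)$; then $\llbracket\tilde{x}\in V_{\tilde{f}}(\tilde\eta)\rrbracket=\Omega$, so $\llbracket V_{\tilde{f}}(\tilde\eta)\neq\emptyset\rrbracket=\Omega$, whence $\llbracket\textnormal{``}V_{\tilde{f}}(\tilde\eta)\textnormal{ is relatively weakly compact''}\rrbracket=\Omega$. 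Since $V_{\tilde{f}}(\tilde\eta)\downarrow=V_f(\eta)$ and $\sigma_s(E,E^\ast)$ corresponds to the weak topology of $\tilde{E}$, the building block on relative stable compactness gives that $V_f(\eta)$ is relatively stably compact for $\sigma_s(E,E^\ast)$, which is the assertion.

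The routine part is the bookkeeping; the genuine work sits in two auxiliary facts that I would isolate beforehand (or fold into the list of building blocks). The first, and the main obstacle, is that $\tilde{f}^\ast\downarrow=f^\ast$, i.e.\ that the essential supremum defining the modular conjugate coincides with the descent of the internal supremum of $\mu(\cdot)-\tilde{f}(\cdot)$ over $\tilde{E}$; this is the $\overline{\R}^{(\F)}$-valued analogue of the fact that $\esssup$ on $L^0$ is the descent of the internal supremum in $\R^{(\F)}$, and it needs a little care because values may be $\pm\infty$. The second is the identification $E^\ast=\{\mu\downarrow\colon\mu\in\tilde{E}^\ast\downarrow\}$ used in the quantifier step, which is already recorded in the discussion of the topological dual above. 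Everything else is a mechanical invocation of the transfer principle.
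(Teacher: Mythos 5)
Your proposal is correct and follows exactly the route the paper intends: the paper gives no written proof, simply asserting that the statement ``follows from the transfer principle applied to its most general version \cite[Theorem 2.4]{saint2013weak}'', and your transcription--transfer--descent argument is precisely the bookkeeping being left implicit. The two auxiliary facts you isolate ($\tilde{f}^\ast\downarrow=f^\ast$, in the spirit of the paper's remark that $\llbracket \tilde{f}^{\ast\ast}=(f^{\ast\ast})^\sim\rrbracket=\Omega$, and the identification $E^\ast=\{\mu\downarrow\colon\mu\in\tilde{E}^\ast\downarrow\}$ from the list of building blocks) are exactly the points the paper's dictionary is meant to supply.
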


Of course, we also have a modular version of the celebrated James' compactness theorem, which is a consequence of the statement above, and also follows from the transfer principle applied to its classical version:

\begin{thm}
	Let $(E,\Vert\cdot\Vert)$ be a stably complete $L^0$-normed module and let $K\subset E$ be stable, $L^0$-convex and $L^0$-norm bounded (i.e. $\esssup_{x\in K}\Vert x\Vert<\infty$). 
	Then, $K$ is stably compact w.r.t. $\sigma_s(E,E^\ast)$ if, and only if, each $\mu\in E^\ast$ there exists $x_0\in K$ such that $\mu(x_0):=\esssup_{x\in K} \mu(x)$.
\end{thm}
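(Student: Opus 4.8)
The plan is to split the equivalence into its easy and its deep half and to handle each via the Boolean-valued dictionary compiled above; concretely, I work with the name $\tilde{E}[\mathcal{T}]$ for a locally convex space attached to $E$ by Theorem~\ref{thm: connectII}, under the standing identifications $E[\mathscr{T}]=\tilde{E}[\mathcal{T}]\downarrow$, $L^0=\R^{(\F)}\downarrow$ and $\bar{L^0}=\overline{\R}^{(\F)}\downarrow$. By the entry on stable completeness, $\llbracket(\tilde{E},\Vert\cdot\Vert^\sim)\text{ is a Banach space}\rrbracket=\Omega$; by the entries on stable and on $L^0$-convex subsets, $\llbracket\emptyset\neq\tilde{K}\subset\tilde{E}\text{ and }\tilde{K}\text{ convex}\rrbracket=\Omega$; the $L^0$-norm bound $\esssup_{x\in K}\Vert x\Vert<\infty$ becomes $\llbracket\tilde{K}\text{ is norm bounded}\rrbracket=\Omega$ through $L^0=\R^{(\F)}\downarrow$; and, by the entry on stably compact sets together with the one on stable weak topologies, $K$ being stably compact for $\sigma_s(E,E^\ast)$ is precisely $\llbracket\tilde{K}\text{ is weakly compact}\rrbracket=\Omega$.

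For the forward (easy) implication, if $K$ is stably compact then for any $\mu\in E^\ast$ one descends $\llbracket\tilde{\mu}(\tilde{K})\text{ is compact and }\exists u\in\tilde{K}\ \tilde{\mu}(u)=\sup_{v\in\tilde{K}}\tilde{\mu}(v)\rrbracket=\Omega$ (transfer of: a continuous image of a compact set is compact, and a nonempty compact subset of $\R$ has a maximum), obtaining $x_0\in K$ with $\mu(x_0)=\esssup_{x\in K}\mu(x)\in L^0$. For the reverse (deep) implication I would invoke Theorem~\ref{thm: nonLinJames}. Pick $x_0\in K$ and replace $K$ by $K-x_0$, which is still stable, $L^0$-convex and $L^0$-norm bounded, contains $0$, and for which both sides of the claimed equivalence are unchanged; so we may assume $0\in K$. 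Then the stable indicator $\delta_K$ (equal to $0$ on $K$ and to the constant $+\infty$ off $K$) is proper and has the local property, because $0\in K$ and $K$ is stable, and its conjugate is $\delta_K^\ast(\mu)=\esssup_{x\in K}\mu(x)$; hence the attainment hypothesis is exactly the hypothesis of Theorem~\ref{thm: nonLinJames} for $f=\delta_K$, whose conclusion applied with $\eta=0$ gives that $V_{\delta_K}(0)=K$ is relatively stably compact for $\sigma_s(E,E^\ast)$. When $K$ is moreover $\sigma_s$-closed (present, as a hypothesis, in the classical James statement) this upgrades to stable compactness, and we are done.

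Alternatively, the whole equivalence is a direct transfer of the classical James theorem in its bounded--closed--convex form: it is a theorem of ZFC, so its relativization to $(\tilde{E},\Vert\cdot\Vert^\sim)$ and $\tilde{K}$ has Boolean truth value $\Omega$, and conjoining this with the translations above yields $\llbracket\tilde{K}\text{ weakly compact}\leftrightarrow(\forall\mu\in\tilde{E}^\ast\ \exists u\in\tilde{K}\ \tilde{\mu}(u)=\sup_{v\in\tilde{K}}\tilde{\mu}(v))\rrbracket=\Omega$, which, read back through the dictionary, is the statement. For this route one extra dictionary item is needed and is the only real work: that for a nonempty stable set $S\subset E$ and $\mu\in E^\ast$ one has $\llbracket(\esssup_{x\in S}\mu(x))^\sim=\sup_{u\in\tilde{S}}\tilde{\mu}(u)\rrbracket=\Omega$, i.e. the descent of the order supremum computed in $\VF$ is the essential supremum; this is the classical Takeuti--Gordon principle, proved using $\llbracket r\leq s\rrbracket=\bigvee\{A\in\F:1_A\phi(r)\leq1_A\phi(s)\}$, the countable chain condition of $\F$ and the countable concatenation property to check that $\esssup_{x\in S}\mu(x)$ is an essential upper bound and the least one, and to pass the quantifier ``for every $\mu\in E^\ast$'' across the descent via $E^\ast=\{\mu\downarrow:\mu\in\tilde{E}^\ast\downarrow\}$ and the Maximum and Mixing Principles. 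The main obstacle is thus not any transfer step but this bookkeeping lemma identifying essential suprema over stable families with Boolean-valued suprema, together with the care with $\sigma_s$-closedness of $K$ needed to turn relative stable compactness into stable compactness.
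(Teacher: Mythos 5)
Your proposal follows the same two routes the paper itself indicates (it offers no written proof: it simply asserts that the theorem ``is a consequence of the statement above'' --- i.e.\ of Theorem~\ref{thm: nonLinJames} --- ``and also follows from the transfer principle applied to its classical version''), so in spirit you are doing exactly what the authors intend, and your second route, the direct transfer of classical James together with the dictionary lemma identifying $\esssup_{x\in S}\mu(x)$ with the Boolean-valued supremum, is sound and is where the real content lies.

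There is, however, one genuine slip in your first route. The function $\delta_K$ you feed into Theorem~\ref{thm: nonLinJames} --- equal to $0$ on $K$ and to the constant $+\infty$ off $K$ --- does \emph{not} have the local property: if $x=1_Ax_1+1_{A^c}x_2$ with $x_1\in K$ and $1_{A^c}x_2\notin 1_{A^c}K$, then $x\notin K$, so $1_A\delta_K(x)=+\infty$ on $A$, while $1_Ax=1_Ax_1+1_{A^c}0\in K$ by stability (given $0\in K$), so $1_A\delta_K(1_Ax)=0$. The hypothesis of Theorem~\ref{thm: nonLinJames} therefore fails for your $f$. The fix is to use the stable (conditional) indicator: $\delta_K(x):=0\cdot 1_{A_x}+\infty\cdot 1_{A_x^c}$ where $A_x$ is the largest event with $1_{A_x}x\in 1_{A_x}K$; this has the local property, is proper, still has conjugate $\delta_K^\ast(\mu)=\esssup_{x\in K}\mu(x)$ (by locality of $\mu$ and stability of $K$), and the attainment hypothesis forces $A_{x_0}=\Omega$, i.e.\ $x_0\in K$, since $\delta_K^\ast(\mu)\in L^0$. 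Your remaining caveat --- that only \emph{relative} stable compactness comes out unless $K$ is $\sigma_s$-closed --- is a correct observation about a hypothesis missing from the statement as printed rather than a defect of your argument; note that for stable $L^0$-convex $K$, $\mathscr{T}$-closedness and $\sigma_s(E,E^\ast)$-closedness coincide by transferring the classical fact via Theorem~\ref{thm: separation}, so assuming $K$ closed in the original topology already suffices to close this gap.
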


A version of the Brouwer Fixed Point Theorem for $(L^0)^d$ was provided in \cite{DKKM13}, which corresponds to the finite-dimensional case in our context.  
Next, we will state a Brouwer fixed point theorem for Hausdorff\footnote{In view of (\ref{eq: neighBas}) and (\ref{eq:cupcap}), It is not difficult to show that $E[\mathscr{T}]$ is Hausdorff if, and only if, $\bigcap\mathscr{U}=\{0\}$, if, and only if, $\llbracket \bigcap\tilde{\mathscr{U}}=\{0\}\rrbracket=\Omega$ and if, and only if, $\llbracket\textnormal{``} \tilde{E}[\mathcal{T}]\textnormal{ is Hasdorff''}\rrbracket=\Omega$} stable locally $L^0$-convex modules, which is a direct application of the transfer principle to the so-called Schauder-Tychonov Theorem.

\begin{thm}
	If $S$ is an $L^0$-convex and stably compact subset of a Hausdorff stable locally $L^0$-convex module $E[\mathscr{T}]$, then any stable and continuous function $f:S\rightarrow S$ has a fixed point in $S$.
\end{thm}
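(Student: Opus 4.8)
The plan is not to prove anything genuinely new but to transcribe the statement into the Boolean-valued universe $\VF$ by means of the building blocks listed above and then read off the conclusion from the transfer principle applied to the classical \emph{Schauder--Tychonoff fixed point theorem} (every continuous self-map of a non-empty compact convex subset of a Hausdorff locally convex space has a fixed point), which is a theorem of ZFC. First I would dispose of the trivial case, so I may assume $S\neq\emptyset$. By the equivalence of categories of Theorem \ref{thm: connectII} I dispose of a name $\tilde{E}[\mathcal{T}]$ for a locally convex space with $E[\mathscr{T}]=\tilde{E}[\mathcal{T}]\downarrow$, and since $E[\mathscr{T}]$ is Hausdorff one has $\llbracket\textnormal{``}\tilde{E}[\mathcal{T}]\textnormal{ is Hausdorff''}\rrbracket=\Omega$, as recorded in the footnote above.

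Next I would translate the data attached to $S$ and $f$. Using the building block for stable subsets, the name $\tilde{S}$ satisfies $\tilde{S}\downarrow=S$, $\llbracket\tilde{S}\neq\emptyset\rrbracket=\Omega$, $\llbracket\textnormal{``}\tilde{S}\textnormal{ is convex''}\rrbracket=\Omega$ (because $S$ is $L^0$-convex) and $\llbracket\textnormal{``}\tilde{S}\textnormal{ is compact''}\rrbracket=\Omega$ (because $S$ is stably compact). Since $f$ is stable, $\llbracket x=y\rrbracket\leq\llbracket f(x)=f(y)\rrbracket$ for all $x,y\in S$, so Theorem \ref{thm: extensional} produces a name $\tilde{f}$ for a function between $\tilde{S}$ and $\tilde{S}$ with $\tilde{f}\downarrow=f$; and, as recorded in the building block for stable functions, continuity of $f$ is equivalent to $\llbracket\textnormal{``}\tilde{f}\textnormal{ is continuous''}\rrbracket=\Omega$. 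Thus all the hypotheses of Schauder--Tychonoff hold inside $\VF$ with Boolean value $\Omega$.

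Then the transfer principle gives $\llbracket\exists x\,(x\in\tilde{S}\wedge\tilde{f}(x)=x)\rrbracket=\Omega$, and the maximum principle furnishes an element $u$ of $\VF$, which I take in the separated universe $\overline{V}^{(\F)}$, with $\llbracket u\in\tilde{S}\rrbracket=\Omega$ and $\llbracket\tilde{f}(u)=u\rrbracket=\Omega$. The first equality says $u\in\tilde{S}\downarrow=S$, and combining $\llbracket\tilde{f}\downarrow(u)=\tilde{f}(u)\rrbracket=\Omega$ with $\llbracket\tilde{f}(u)=u\rrbracket=\Omega$ gives $f(u)=u$, so $u\in S$ is the desired fixed point. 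The only step that is not entirely automatic — but which the building-blocks discussion has already settled — is checking that $\tilde{f}$ is genuinely a name for a \emph{self}-map of $\tilde{S}$ and that the topological properties of $S$ in $E[\mathscr{T}]$ (compactness, continuity) coincide with those of $\tilde{S}$ and $\tilde{f}$ in $\tilde{E}[\mathcal{T}]$ via relation (\ref{eq: neighBas}). I expect the only real obstacle to be this bookkeeping, namely arranging that all the classical hypotheses simultaneously receive Boolean value $\Omega$; once that is done the conclusion is forced and no genuine argument is needed.
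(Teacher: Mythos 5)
Your proposal is correct and is exactly the argument the paper intends: the theorem is presented there as ``a direct application of the transfer principle to the so-called Schauder--Tychonov Theorem,'' with the translation of hypotheses handled by the same building blocks (stable/convex/compact subsets, stable continuous functions, the Hausdorff footnote) that you invoke. Your added detail of extracting the fixed point via the maximum principle and the descent $\tilde{S}\downarrow=S$ is just an explicit writing-out of what the paper leaves implicit, so there is nothing to flag.
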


Obviously, all these Theorems  are just some examples: we can state a version of any theorem $T$ on locally convex spaces and it immediately renders a version for locally $L^0$-modules of the form $\llbracket T \rrbracket = \Omega$.\\

Finally, let us turn to the discussion of an example of financial application:

The notion of \emph{convex risk measure} was independently introduced by Föllmer and Schied \cite{follmer2002convex} and Fritelli and Gianin \cite{frittelli2002putting} as an extension of the notion of \emph{coherent risk measure} introduced in  Artzner et al.~\cite{artzner01}. 
Let $\mathscr{X}$ be an ordered vector space with $\R\subset \mathscr{X}$ which models all the financial positions in a financial market.  
A convex risk measure is a proper convex function $\rho:\mathscr{X}\rightarrow\overline{\R}$ which satisfies the following conditions for all $x,y\in\mathscr{X}$:
\begin{itemize}
	\item \emph{Monotonicity}: if $x\leq y$, then $\rho(y)\leq\rho(x)$;
	\item \emph{Cash invariance}: $\rho(x+r)=\rho(x)-r$, for all $r\in\R$.
\end{itemize}

Now, suppose that $(\Omega,\Sigma,\PP)$ models the market events at some future date $t>0$. 
In this case, from a modelling point of view, the risk  of any financial position is contingent on the  information encoded in the measure algebra $\F$. 
For instance, the risk measurably depends on the decisions taken by the risk manager in virtue of the market eventualities arisen at time $t$.   
Therefore, in this case, the different financial positions can be modelled by an ordered $L^0$-module $\mathscr{X}$  with $L^0\subset \mathscr{X}$. 
Filipovic et al.~\cite{kupper11} proposed the following definition: a \emph{conditional convex risk measure} is a proper $L^0$-convex function $\rho:\mathscr{X}\rightarrow \bar{L^0}$ which satisfies the following conditions for all $x,y\in\mathscr{X}$:
\begin{itemize}
	\item \emph{Monotonicity}: if $x\leq y$, then $\rho(y)\leq\rho(x)$;
	\item \emph{Cash invariance}: $\rho(x+\eta)=\rho(x)-\eta$, for all $\eta\in L^0$.
\end{itemize}  

Since a conditional convex risk measure $\rho:\mathscr{X}\rightarrow \bar{L^0}$ is $L^0$-convex, in particular, it has the local property, and Theorem \ref{thm: extensional} defines a name for a function $\tilde{\rho}$ from $\tilde{\mathscr{X}}$ to $\overline{\R}^{(\F)}$. 
Moreover, it can be verified that $\llbracket\textnormal{``}\tilde{\rho}\textnormal{ is convex, monotone and cash-invariant''}\rrbracket=\Omega$. 
We conclude that a conditional convex risk measure $\rho$ can be identified with a name $\tilde{\rho}$ for a convex risk measure  within $V^{(\F)}$.  
Thus, the machinery of Boolean-value models and its transfer principle can be applied. 

From a modelling point of view, we have that, in the same manner the available market information is encoded in $\F$,
the financial strategy followed by the risk manager in order to maximize or hedge future payments  can be analytically expressed in terms of the formal language $\mathcal{L}^{(\F)}$,
 which consistently depends on the information of $\F$. 
 Thus the Boolean-valued analysis makes available to us a powerful technology to incorporate trading rules based on equilibrium prices or risk constraints in the mathematical analysis of certain problems of mathematical finance involving a multi-period setting.


%
%
%

\section{A precise connection between Conditional set theory and Boolean-valued models}

In \cite{DJKK13} it was introduced the notion of conditional set:

\begin{defn}
\cite[Definition 2.1]{DJKK13}
\label{def: condSet}
Let $X$ be a non-empty set and let $\A$ be a complete Boolean algebra. 
A \emph{conditional set} of $X$ and $\A$ is a set $\textbf{X}$ such that there exists a surjection $(x,a)\mapsto x|a$ from $X\times \A$ onto $\textbf{X}$ satisfying: 

\begin{enumerate}
	\item[(C1)] if $x,y\in X$ and $a,b\in\F$ with $x|a=y|b$, then $a=b$;
	\item[(C2)] (Consistency) if $x,y\in X$ and $a,b\in\A$ with $a\leq b$, then $x|b=y|b$ implies $x|a=y|a$;
	\item[(C3)] (Stability) if $\{a_i\}_{i\in I}\in p(1)$ and $\{x_i\}_{i\in I}\subset X$, then there exists a unique $x\in X$ such that $x|a_i=x_i|a_i$ for all $i\in I$. 
\end{enumerate}
    
The unique element $x\in X$ provided by C3, is called the \emph{concatenation} of the family $\{x_i\}$ along the partition $\{a_i\}$, and is denoted by  $\sum x_i|a_i$.
\end{defn}  


Let $\textbf{X},\textbf{Y}$ be conditional sets. According to \cite[Definition 2.1]{DJKK13} a function $f:X\rightarrow Y$ is said to be \emph{stable} if 
$$f\left(\sum x_i|a_i\right)=\sum f(x_i)|a_i,\quad\textnormal{ for }\{a_i\}\in p(1),\:\{x_i\}\subset X.$$
 
If $f:X\rightarrow Y$ is a stable function, it is simply to verify that $\textbf{G}_{\textbf{f}}:=\left\{(x|a,f(x)|a)\;\:\; x\in X,\:a\in\A\right\}$ 
is a conditional set of the graph of $f$ and $\A$. $\textbf{G}_{\textbf{f}}$ is called the \emph{conditional graph of a conditional function} $\textbf{f}:\textbf{X}\rightarrow\textbf{Y}$ (see \cite[Definition 2.1]{DJKK13}). 

A conditional function $\textbf{f}:\textbf{X}\rightarrow\textbf{Y}$ is \emph{conditionally injective} if $x|a\neq x^\prime|a$ for all $a>0$ implies that $f(x)|a\neq f(x^\prime)|a$ for all $a>0$; it is \emph{conditionally surjective} whenever $f$ is surjective; and it is a \emph{conditional bijection} if it is conditionally injective and surjective.

Then the following result gives the relation between conditional sets of the universe $V$ and the boolean-valued universe $\VA$.

\begin{thm}\label{thm: connectIII}
For fixed a Boolean algebra $\A$, there is an equivalence of categories between the category of conditional sets of $\A$ whose morphisms are conditional functions, and the category of elements $x$ of $\VA$ such that $\llbracket x\neq\emptyset\rrbracket=1$ whose morphisms are names for functions in $\VA$.   
\end{thm}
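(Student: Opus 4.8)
The strategy mirrors the proofs of Theorems \ref{thm: connectI} and \ref{thm: connectII}: construct two functors and show they are inverse equivalences up to natural isomorphism. In one direction, given a name $x$ in $\VA$ with $\llbracket x\neq\emptyset\rrbracket=1$, we assign the descent $x\downarrow$. We equip $x\downarrow$ with the structure of a conditional set as follows: the underlying ``small'' set is $x\downarrow$ itself, the Boolean algebra is $\A$, and for $y\in x\downarrow$ and $a\in\A$ we define $y|a$ to be the equivalence class of $y$ under the relation identifying $y,y'$ when $\llbracket y=y'\rrbracket\geq a$; more precisely, using the mixing principle one picks a canonical representative. One then checks (C1), (C2) and (C3): (C1) follows because the Boolean value $\llbracket y=y'\rrbracket$ determines $a$ through $\llbracket y|a = y'|a\rrbracket$ being $\geq a$ but this must be an equality-type relation read off the model; (C2) is the monotonicity $a\leq b$, $\llbracket y=y'\rrbracket\geq b \Rightarrow \llbracket y=y'\rrbracket\geq a$; and (C3) is exactly the mixing principle (Theorem \ref{thm: Mixing}), which produces the required concatenation and its uniqueness. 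For a name for a function $f$ between $x$ and $y$ (with both nonempty), the descent $f\downarrow: x\downarrow\to y\downarrow$ is stable because $\llbracket u=v\rrbracket\leq\llbracket f(u)=f(v)\rrbracket$, so it induces a conditional function between the associated conditional sets.

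In the reverse direction, given a conditional set $\mathbf{X}$ of $X$ and $\A$, we must build a name $X_0\in\VA$ with $\llbracket X_0\neq\emptyset\rrbracket=1$. Following the pattern of Theorem \ref{thm: connectI}, for each $x\in X$ we define an $\A$-valued function $\bar x$ on a domain of the form $\{\bar y : y\in X\}$ by setting $\bar x(\bar y) := \bigvee\{a\in\A : x|a = y|a\}$ (the ``equalizer'' of $x$ and $y$ in the conditional set), and then let $X_0$ be the name with domain $\{\bar x : x\in X\}$ and constant value $1$. The crucial computation, exactly parallel to the proof of Theorem \ref{thm: connectI}, is that $\llbracket \bar x = \bar y\rrbracket = \bigvee\{a : x|a=y|a\}$, and that this supremum is attained — here one uses (C3) together with a maximal-disjoint-family argument. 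This attainment, plus (C1) and (C2), gives that $x\mapsto\tilde x$ (the separated representative of $\bar x$) is a bijection from $X$ onto $X_0\downarrow$ that is compatible with concatenations; surjectivity of this map uses the mixing principle together with (C3) exactly as in Theorem \ref{thm: connectI}. For a conditional function $\mathbf{f}:\mathbf{X}\to\mathbf{Y}$, represented by a stable function $f:X\to Y$, the inequality $\llbracket \tilde x=\tilde x'\rrbracket \leq \llbracket \widetilde{f(x)} = \widetilde{f(x')}\rrbracket$ follows from stability of $f$, so Theorem \ref{thm: extensional} yields a name for a function between $X_0$ and $Y_0$ realizing $f$.

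Finally one checks that the two functors are inverse up to natural isomorphism, again as in the previous proofs: starting from a conditional set one recovers it (as a conditional set) from $X_0\downarrow$ via $x\mapsto\tilde x$, because this map is a bijection intertwining the concatenation operations, hence an isomorphism of conditional sets; and starting from a name $x$, applying Theorem \ref{thm: extensional} to the map $x\downarrow \to (\widetilde{x\downarrow})\downarrow$ gives $\llbracket (x\downarrow)^\sim \cong x\rrbracket = 1$. Naturality in both cases is a routine diagram chase.

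\textbf{Main obstacle.} The principal subtlety is not the Boolean-valued bookkeeping, which is entirely analogous to Theorems \ref{thm: connectI}--\ref{thm: connectII}, but matching up the \emph{definitions}: one must verify carefully that the abstract axioms (C1)--(C3) of a conditional set correspond precisely to the properties of the descent of a name, and conversely that the operation $\bar x(\bar y):=\bigvee\{a : x|a=y|a\}$ produces a bona fide element of $\VA$ whose Boolean equality coincides with the conditional ``equalizer'' and is attained. Axiom (C1), which says the Boolean value $a$ is recoverable from the conditioned element $x|a$, is the delicate one to translate, since in $\VA$ there is no literal ``restriction to level $a$'' — one works instead with the relation $\llbracket y=y'\rrbracket\geq a$ and must check this does not collapse information, which again comes down to the attainment of the relevant suprema via (C3) and the countable/arbitrary mixing principle. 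Once this dictionary is set up, the equivalence of categories and the naturality of the transformations follow the now-familiar template.
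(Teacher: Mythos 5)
Your proposal follows essentially the same route as the paper: descent plus the mixing principle in one direction; in the other, the name $\tilde X$ built from the equalizers $a_{u,v}=\bigvee\{b\in\A \colon u|b=v|b\}$, with Theorem~\ref{thm: extensional} handling morphisms and the attainment of these suprema (via (C3) and a maximal disjoint family) doing the real work, exactly as in Theorem~\ref{thm: connectI}. One step would fail as literally written: you define $y|a$ to be the equivalence class of $y$ alone in $x\downarrow$ under the relation $\llbracket y=y'\rrbracket\geq a$, and this does not satisfy (C1) --- for instance, if $\llbracket\textnormal{``}x\textnormal{ is a singleton''}\rrbracket=1$ then all these classes coincide for every $a$, so $a$ is not recoverable from $y|a$. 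The paper avoids this by taking the quotient of $(x\downarrow)\times\A$ under $(u,a)\sim(v,b)$ iff $a=b$ and $\llbracket u=v\rrbracket\geq a$, so that $a$ is retained as part of the datum and (C1) holds by construction; you correctly single out (C1) as the delicate axiom, but your sentence about it does not actually close this gap. A second, cosmetic slip: the domain of $\bar x$ should be the set of canonical names $\{\check v\colon v\in X\}$ (as in Theorem~\ref{thm: connectI}), not $\{\bar y\colon y\in X\}$, which would make the definition circular. With these two repairs the argument matches the paper's proof.
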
 

\begin{proof}
First, suppose that $x$ is an element of $\VA$ with $\llbracket x\neq\emptyset\rrbracket=1$. 
Then we consider the equivalence relation on $(x\downarrow)\times\A$ given by
$$(u,a)\sim(v,b)\quad\textnormal{ whenever }a=b,\: \llbracket u=v\rrbracket\geq a.$$

Let us denote by $x|a$ the class of $(x,a)$ and let $\textbf{x}\downarrow$ be the corresponding quotient set. 
Then $\textbf{x}\downarrow$ is a conditional set of $x\downarrow$ and $\A$. 
Indeed, (C1) and (C2) from Definition \ref{def: condSet} are trivially satisfied. 
Further, (C3) follows from the mixing principle (Theorem \ref{thm: Mixing}). 

Suppose that $f,X,Y$ are in $\VA$ and $\llbracket f:X\rightarrow Y\rrbracket=1$.
Then $f\downarrow$ is a function from $X\downarrow$ to $Y\downarrow$ such that $\llbracket f\downarrow(u)=f(u)\rrbracket=1$ for all $u\in X\downarrow$.
Now, we claim that $f\downarrow:X\downarrow\rightarrow Y\downarrow$ is a stable function of the conditional sets $\textbf{X}\downarrow$, $\textbf{Y}\downarrow$.  
Indeed, given $\{a_i\}\in p(1)$ and $\{u_i\}\subset X$ we take $u:=\sum u_i|a_i\in x\downarrow$.
 We have that $\llbracket f\downarrow(u)=f\downarrow(u_i)\rrbracket=\llbracket f(u)=f(u_i)\rrbracket\geq \llbracket u=u_i\rrbracket\geq a_i$, and thus $f\downarrow(u)|a_i=f\downarrow(u_i)|a_i$  each $i$.
 This shows that $f\downarrow(u)=\sum f\downarrow(u_i)|a_i$, hence $f\downarrow$ is stable. 
 We can consider the corresponding conditional function $\textbf{f}\downarrow$.

Thereby, we define the functor $G(x):=\textbf{x}\downarrow$, $G(f):=\textbf{f}\downarrow$. Let us construct the inverse functor. 
Suppose now that $\textbf{X}$ is a conditional set of $X$ and $\A$. We will construct from $X$ an element $\tilde{X}$ of $\VA$. Indeed, for every $u\in X$ we define $\tilde{u}:\mathcal{D}_u\rightarrow\A$ where $\mathcal{D}_u:=\left\{\check{v}\colon v\in X \right\}$, and $\tilde{u}(\check{v})=a_{u,v}$ with $a_{u,v}:=\bigvee\left\{ b\in\A \colon u|a=v|a \right\}$ for each $v\in X$. Notice that $u|a_{u,v} = v|a_{u,v}$. The proof is similar to others we have done before: take a maximal disjoint family of elements $b$ such that $u|b = v|b$ and then use uniqueness of (C3) of Definition \ref{def: condSet}.

 Let $\tilde{X}:\mathcal{D}\rightarrow\A$ where
 $$\mathcal{D}=\left\{\bar{u}\colon u\in X \right\}\textnormal{ and } \tilde{X}(\bar{u})=1\textnormal{ for each }u\in X.$$ 

One has that $\tilde{X}$ is an element of $\VA$. Moreover, we claim that $\llbracket \bar{u}=\bar{v}\rrbracket=a_{u,v}$ for all $u,v\in X$.
 Indeed, 
\begin{equation}
\label{eqII}
\llbracket \bar{u}=\bar{v}\rrbracket=\underset{t\in X}\bigwedge \left(a_{u,t}\Rightarrow\llbracket \check{t}\in\bar{v}\rrbracket \right)\wedge\underset{s\in X}\bigwedge \left(a_{v,s}\Rightarrow\llbracket \check{s}\in\bar{u}\rrbracket \right).
\end{equation}

In addition,
\[
\llbracket \check{t}\in\bar{v}\rrbracket=\underset{w\in X}\bigvee a_{v,w}\wedge\llbracket \check{t}=\check{w}\rrbracket=a_{v,t},
\]
because $\llbracket \check{t}=\check{w}\rrbracket=1$ if $t=w$, $\llbracket \check{t}=\check{w}\rrbracket=0$ otherwise.
 Similarly, one has $\llbracket \check{s}\in\bar{u}\rrbracket=a_{u,s}$.

Therefore, replacing in (\ref{eqII}), one has 
\[
\llbracket \bar{u}=\bar{v}\rrbracket=\underset{t\in X}\bigwedge (a_{u,t}^c\vee a_{v,t}) \wedge\underset{s\in X}\bigwedge (a_{v,s}^c\vee a_{u,s}) .
\]

By considering above $t=u$ and $s=v$, we obtain
\[
\llbracket \bar{u}=\bar{v}\rrbracket\leq a_{u,v}
\]

For the converse inequality, suppose  by contradiction that $0< a:=a_{u,v}\wedge(a_{u,t}^c\vee a_{v,t})^c$ for some $t$. Since $a\leq a_{u,v},a_{u,t}$ one has $v|a=u|a=t|a$ by (C2).  But $a\leq a_{v,t}^c$ implies that $v|a\neq t|a$, which is a contradiction. 

For any $u\in X$, let $\tilde{u}$ denote the canonical representative of $\bar{u}$ in $\overline{V}^{(\A)}$. 
We claim that the map $X\rightarrow\tilde{X}\downarrow$ given by $u\rightarrow\tilde{u}$ is one-to-one. 
Indeed, if $\tilde{u}=\tilde{v}$, then $1=\llbracket\bar{u}=\bar{v} \rrbracket=a_{u,v}$, hence $u=v$. On the other hand, given $w\in \tilde{X}\downarrow$, one has 
$$1=\llbracket w\in \tilde{X}\rrbracket=\underset{u\in X}\bigvee\llbracket\bar{u}=w\rrbracket.$$ 
As we have done before, we can find by maximality a partition $\{a_i\}\in p(1)$ so that $a_i\leq\llbracket \bar{u}_i=w\rrbracket$ for some $u_i\in X$, each $i$. 
Then, (C3) of Definition \ref{def: condSet} provides us with $u\in X$ such that $u|a_i=u_i|a_i$ for all $i$.
We have that $\llbracket \bar{u}=\bar{u}_i\rrbracket=a_{u,u_i}\geq a_i$. Hence $a_i \leq \llbracket \bar{u}=\bar{u}_i\rrbracket \wedge \llbracket \bar{u}_i=w\rrbracket$ for all $i$, and so $\llbracket w=\bar{u}\rrbracket=1$ and thus $\tilde{u}=w$. 

Now suppose that $\textbf{f}:\textbf{X}\rightarrow \textbf{Y}$ is a conditional function between the conditional sets $\textbf{X},\textbf{Y}$.
 We consider the stable function $f:X\rightarrow Y$. 
 Let $g:\tilde{X}\downarrow\rightarrow \tilde{Y}\downarrow$ be with $g(\tilde{x}):=(f(x))^{\sim}$, which is well defined since the map $x\mapsto\tilde{x}$ is one-to-one. 
 Given $x,y\in X$, using that $f$ is stable we can show that  $\llbracket \tilde{x}=\tilde{y}\rrbracket=a_{x,y}\leq a_{f(x),f(y)}=\llbracket g(\tilde{x})=g(\tilde{y})\rrbracket$. Due to Theorem \ref{thm: extensional}, we can find $\tilde{f}$ in $\VA$ with $\llbracket \tilde{f}:\tilde{X}\rightarrow\tilde{Y}\rrbracket=1$ and such that $\llbracket g(\tilde{x})=\tilde{f}(x) \rrbracket=1$ for all $x\in X$.

Thereby, we take the functor $H(\textbf{X}):=\tilde{X}$ and $H(\textbf{f}):=\tilde{f}$. 
We will show that $G$ and $H$ are inverse equivalences. 
Suppose that $x$ is an element of $\VA$ with $\llbracket x\neq \emptyset\rrbracket=1$. 
We consider the map $x\downarrow\rightarrow ((x \downarrow)^{\sim})\downarrow$, $u\mapsto\tilde{u}$. 
Due to Theorem \ref{thm: extensional} it defines a name for a bijection between $x$ and $(x\downarrow)^{\sim}$. 
It follows by inspection that there is a natural isomorphism between $HG$ and the identity functor.

If $\textbf{X}$ is a conditional set, then we can consider the mapping $X\mapsto (\tilde{X})\downarrow$, $x\mapsto\tilde{x}$. This is a stable bijection, which defines a conditional bijection between the conditional sets $\textbf{X}$ and $\tilde{\textbf{X}}\downarrow$.    
 This also gives a natural isomorphism between $GH$ and the identity functor.
\end{proof}

\begin{rem}
The Boolean-valued part of the proof of Theorem \ref{thm: connectIII} is  covered by the well-known theorem from Boolean-valued analysis stating the equivalence of the category of names for non-empty sets and names for functions and the category of non-empty mix-complete Boolean sets and contractive functions  (see Kusraev and Kutateladze~\cite[Theorem 3.5.10]{kusraev2012boolean}). 
Thus, Theorem \ref{thm: connectIII} actually establishes that the category of conditional sets of $\A$ and conditional functions is equivalent to the category of non-empty mix-complete Boolean sets over $\A$ and contractive functions.
\end{rem}

One more time, the important message is not the equivalence of categories provided above, but that for any conditional set $\textbf{X}$ we build a tailored name $\tilde{X}$ of a set that induces a conditional set $\tilde{\textbf{X}}\downarrow$ which is essentially $\textbf{X}$.

Let us fix a conditional set $\textbf{X}$. 
For the forthcoming discussion, we will suppose w.l.o.g. that $\textbf{X}=\tilde{\textbf{X}}\downarrow$.  

%
%
%

Next, we will briefly explain how the main elements of the framework of conditional sets are connected to Boolean-valued analysis. 
A comprehensive introduction to conditional set theory is given in \cite{DJKK13}, thus for each unexplained notion we will give an exact reference to its definition in \cite{DJKK13}:

\begin{itemize}

\item \emph{Conditional subsets}: A non-empty subset $S$ of $X$ is \emph{stable} if $\sum x_i|a_i\in S$ whenever $\{x_i\}\subset S$ and $\{a_i\}\in p(1)$. 
A \emph{conditional subset} of $\textbf{X}$ is a conditional set $\textbf{S}:=\{x|a \colon x\in S,\: a\in\A \}$, where $S$ is a stable subset of $X$. For short, we will write $\textbf{S}\sqsubset\textbf{X}$.

Suppose that $\textbf{S}\sqsubset\textbf{X}$.  
We define $\hat{S}:\mathcal{D}_\textbf{S}\rightarrow\A$ with $\mathcal{D}_{\textbf{S}}:=\left\{\tilde{x}\colon x\in S\right\}$ and $\hat{S}(\tilde{x}):=1$. 
Then it can be verified that $\hat{S}$ is a name with $\llbracket\hat{S}\subset\tilde{X}\rrbracket=1$ and $\hat{\textbf{S}}\downarrow=\textbf{S}$. 

Now, suppose that $S_0$ is a name with $\llbracket\emptyset\neq S_0\subset\tilde{X}\rrbracket=1$. 
Then $\textbf{S}_0\downarrow\sqsubset\textbf{X}$ and $\llbracket S_0=(S_0\downarrow)^\wedge\rrbracket=1$.    

\item \emph{Conditional power set}: Let $P(\textbf{X})$ be the collection of all stable subsets of $E$. 
For $S\in P(\textbf{X})$ and $a\in\A$, we define $\textbf{S}|a:=\{x|b \colon x\in S,\: b\leq a\}$. 
The set $\textbf{P}(\textbf{X}):=\{\textbf{S}|a \colon S\textnormal{ is stable, }a\in\A\}$ is a conditional set which is called \emph{conditional power set}.

Suppose that $\textbf{C}\sqsubset\textbf{P}(\textbf{X})$.  
Let  $\hat{C}:\mathcal{D}_\textbf{C}\rightarrow\A$ with $\mathcal{D}_{\textbf{C}}:=\left\{\hat{S}\colon S\in C\right\}$ and $\hat{C}(\hat{S}):=1$.  
Then $\hat{C}$ is a name for a set of subsets of $\tilde{X}$. 

Now, given a name $C_0$ for a non-empty collection of non-empty sets of $\tilde{X}$, we define $C_0\Downarrow:=\{S\downarrow  \colon S\in C_0\downarrow \}$. 
Then $C_0\Downarrow$ is a stable set of subsets of $X$ and we can consider the corresponding conditional set $\textbf{C}_0\Downarrow\sqsubset \textbf{P}(\textbf{X})$. 

Moreover, if $\textbf{C}\sqsubset\textbf{P}(\textbf{X})$  one has that $\hat{\textbf{C}}\Downarrow=\textbf{C}$ and if  $C_0$ is a name for a non-empty collection of non-empty sets of $\tilde{X}$ one has $\llbracket C_0=(C_0\Downarrow)^\wedge   \rrbracket=1$.

In particular, if $\textbf{C}=\textbf{P}(\textbf{X})$, then $\hat{C}$ is a name for the collection of all non-empty subsets  of $\tilde{X}$ in $V^{(\A)}$. 

\item \emph{Conditional step functions}: If $E$ is a non-empty set, consider the \emph{conditional set of step functions}, let us say $\textbf{E}_\textbf{s}$, see \cite[Examples 2.3(5)]{DJKK13}. Then, the name $\tilde{E}_s$ is precisely the canonical name $\check{E}$ of $E$ in $V^{(\A)}$.

The \emph{conditional natural numbers} $\textbf{N}$ and the \emph{conditional rational numbers} $\textbf{Q}$ are  introduced in \cite{DJKK13} as a particular case of the step functions. 
It is known that $\llbracket\N^{(\A)}=\check{\N}\rrbracket=\Omega$ and $\llbracket\Q^{(\A)}=\check{\Q} \rrbracket=\Omega$, see eg~\cite{takeuti2015two}. 
Thus, it is satisfied that $\tilde{N}$ and $\tilde{Q}$ are names for the natural numbers and the rational numbers of $V^{(\A)}$, respectively.
\item \emph{Conditional real numbers}: In \cite{DJKK13} a conditional set $\textbf{R}$ which is called  \emph{conditional real numbers} is defined, see \cite[Definition 4.3]{DJKK13}. 
Then it can be verified that $\tilde{R}$ is a name for the real numbers of $V^{(\A)}$. 
 
\item \emph{Conditional topologies}: Suppose that $\boldsymbol{\mathcal{T}}$ is a \emph{conditional topology} on $\textbf{X}$, see \cite[Definition 3.1]{DJKK13}. 
Then $\hat{\mathcal{T}}$ is a name for the set of non-empty open sets of a topology on $\tilde{X}$. 

If $\mathcal{T}_0$ is a name for  the set of non-empty open sets of a topology on $\tilde{X}$ then $\boldsymbol{\mathcal{T}}_0\Downarrow$ is a conditional topology. 

Moreover, 
$\textbf{O}$ is a \emph{conditional open} subset  if and only if $\hat{O}$ is a name for an open set. 
$\textbf{C}$ is a \emph{conditional closed} subset if and only if $\hat{C}$ is a name for a closed set. 
$\textbf{S}$ is a \emph{conditionally compact subset}  (see \cite[Definition 3.24]{DJKK13}) if and only if $\hat{S}$ is a name for a compact subset. 

Furthermore, $\boldsymbol{\mathcal{T}}$ is \emph{conditionally Hausdorff} (see \cite[Section 3]{DJKK13}) if and only if $\llbracket\textnormal{``}\mathcal{T}_0\textnormal{ is Hausdorff''} \rrbracket=1$.

\item \emph{Conditional functions:} Given a conditional function $\textbf{f}:\textbf{S}_1\rightarrow\textbf{S}_2$, where $\textbf{S}_1,\textbf{S}_2$ are conditional subsets of $\textbf{X}$, then we have a stable function $g:S_1\rightarrow S_2$. Theorem \ref{thm: extensional} allows to define a name $\hat{f}$ of a function from $S_1$ to $S_2$ with $\hat{f}\downarrow=f$. 

Conversely, if $f$ is name for a function between subsets of $\tilde{X}$, then $f\downarrow$ is a stable function between stable subsets of $X$ and it defines a conditional function $\textbf{f}\downarrow$  between conditional subsets of $\textbf{X}$.

The same applies to \emph{conditional families}, \emph{conditional nets} and \emph{conditional sequences}, see \cite[Definition 2.20]{DJKK13}.

\end{itemize}

Bearing in mind the construction given in the proof of Theorem \ref{thm: connectIII}, the following is easy to check: $\textbf{X}$ is a \emph{conditional metric space}, see \cite[Definition 4.5]{DJKK13}, if and only if $\tilde{X}$ is a name for a metric space;
 $\textbf{X}$ is a \emph{conditional locally convex space}, see \cite[Definition 5.4]{DJKK13}, if and only if $\tilde{X}$ is a name for a locally convex space;
 $\textbf{X}$ is a \emph{conditional normed space}, see \cite[Definition 5.11]{DJKK13}, if and only if $\tilde{X}$ is a name for a normed space;  $\textbf{X}$ is a \emph{conditional Banach space}, see \cite[Section 5]{DJKK13}, if and only if $\tilde{X}$ is a name for a Banach space.

Again, we see that all these objects are some of the building blocks for the main results provided in \cite{DJKK13}. 
Clearly, names for more and more conditional versions of classical objects can be defined by using the same logic. 

As an instance of application, we can provide a conditional version of the Schauder-Tychonov fixed point theorem:

\begin{prop}
Let $\textbf{X}$ be a conditional locally convex space which is conditionally Hausdorff. 
If $\textbf{C}$ is a conditionally compact conditional subset of $\textbf{X}$ and $\textbf{f}:\textbf{C}\rightarrow \textbf{C}$ is a conditionally continuous conditional function, then there exists $\textbf{x}$ in $\textbf{C}$ such that $\textbf{f}(\textbf{x})=\textbf{x}$.  
\end{prop}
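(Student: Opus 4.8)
The plan is to exploit the dictionary established in Theorem~\ref{thm: connectIII} together with the transfer principle, exactly in the spirit of the applications given in Section~2. First I would use the hypothesis $\textbf{X}=\tilde{\textbf{X}}\downarrow$ (which we assumed w.l.o.g.) so that $\tilde{X}$ is a name for a locally convex space in $V^{(\A)}$, and the "conditionally Hausdorff" assumption translates, via the bullet point on conditional topologies, to $\llbracket\textnormal{``}\tilde{X}\textnormal{ is a Hausdorff locally convex space''}\rrbracket=1$. Next I would pass the data $\textbf{C}$ and $\textbf{f}$ through the same translation: since $\textbf{C}$ is a conditionally compact conditional subset, $\hat{C}$ is a name for a (nonempty) compact subset of $\tilde{X}$; since $\textbf{C}$ is a conditional subset it is in particular conditionally convex in the conditional locally convex space $\textbf{X}$, which means $\llbracket\textnormal{``}\hat{C}\textnormal{ is convex''}\rrbracket=1$ (here one should recall that in a conditional locally convex space the ambient conditional set carries a conditional vector space structure and a conditional subset is automatically stable, hence this convexity translation is the one recorded for stable $L^0$-convex sets in Section~2, transcribed to the conditional setting). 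Finally, the conditionally continuous conditional function $\textbf{f}:\textbf{C}\to\textbf{C}$ yields, via Theorem~\ref{thm: extensional}, a name $\hat{f}$ with $\hat{f}\downarrow=f$ and $\llbracket\textnormal{``}\hat{f}\colon\hat{C}\to\hat{C}\textnormal{ is continuous''}\rrbracket=1$.

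With all the data internalized, the classical Schauder--Tychonov theorem ("a continuous self-map of a nonempty compact convex subset of a Hausdorff locally convex topological vector space has a fixed point") is a theorem of ZFC, so by the Transfer Principle (Theorem~1.1)
\[
\llbracket\textnormal{``}\exists\, y\in\hat{C}\ \textnormal{ with }\hat{f}(y)=y\textnormal{''}\rrbracket=1.
\]
By the Maximum Principle (Theorem~1.2) there is a name $y_0\in V^{(\A)}$ realizing this existential with Boolean value $1$; replacing $y_0$ by its representative in $\overline{V}^{(\A)}$ we get $y_0\in\hat{C}\downarrow$ with $\llbracket \hat{f}(y_0)=y_0\rrbracket=1$, hence $f(y_0)=y_0$ in $X$ after applying the descent and using $\hat{f}\downarrow=f$. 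Setting $\textbf{x}:=y_0|1\in\textbf{C}$ and descending the equality back through the conditional-set correspondence of Theorem~\ref{thm: connectIII} gives $\textbf{f}(\textbf{x})=\textbf{x}$, which is the desired conclusion.

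The routine steps are the bookkeeping translations of "Hausdorff", "compact", "convex" and "continuous" between the conditional world and $V^{(\A)}$; these are all instances of items already listed in the bulleted dictionaries of Sections~2 and~3, so I would merely cite them rather than re-prove them. The one point deserving genuine care is the translation of conditional convexity: one must make sure that the conditional vector space operations on $\textbf{X}$ correspond under Theorem~\ref{thm: connectIII} to the vector space operations on the name $\tilde{X}$, so that a conditional subset (which is automatically stable, hence in the relevant sense conditionally convex inside a conditional locally convex space) really does descend from a name for a convex set — this is the analogue, in the conditional-set language, of the remark in Section~2 that an $L^0$-convex set $S$ satisfies $\llbracket\textnormal{``}\tilde{S}\textnormal{ is convex''}\rrbracket=\Omega$, and it is the step I expect to be the main (though still mild) obstacle. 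Once that correspondence is in hand, the proof is nothing more than "$\llbracket\textnormal{Schauder--Tychonov}\rrbracket=1$", read off through the descent.
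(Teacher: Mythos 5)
Your overall strategy coincides with the paper's: translate $\textbf{X}$, $\textbf{C}$, $\textbf{f}$ into names $\tilde{X}$, $\hat{C}$, $\hat{f}$ via the dictionary of Theorem \ref{thm: connectIII}, observe that the proposition is the reformulation of $\llbracket T\rrbracket=1$ where $T$ is the Schauder--Tychonov theorem, invoke the transfer principle, and descend a witness with the maximum principle. That part is fine and is exactly what the paper does.

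There is, however, a genuine error in the one step you single out as delicate. You claim that a conditional subset of a conditional locally convex space is \emph{automatically} conditionally convex because it is stable, and hence that $\llbracket\textnormal{``}\hat{C}\textnormal{ is convex''}\rrbracket=1$ comes for free. This is false: stability (closure under concatenations $\sum x_i|a_i$ along partitions of unity in $\A$) is closure under \emph{Boolean} mixing, not under convex combinations with scalars from the conditional reals. For instance, in the conditional real line the stable set generated by the two constants $0$ and $1$ consists of all mixings $\sum \epsilon_i|a_i$ with $\epsilon_i\in\{0,1\}$; it is a perfectly good conditional subset but is not conditionally convex, and its name is internally a two-point set. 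Convexity cannot be dispensed with in Schauder--Tychonov (internally, a rotation of a circle is a continuous fixed-point-free self-map of a compact set), so without it the transferred statement simply does not apply to $\hat{C}$. The correct repair is to add the hypothesis that $\textbf{C}$ is conditionally convex --- paralleling the $L^0$-convexity hypothesis in the modular fixed point theorem of Section 2 --- and translate it to $\llbracket\textnormal{``}\hat{C}\textnormal{ is convex''}\rrbracket=1$ through the dictionary; note that the proposition as printed also omits this hypothesis, so the honest conclusion is that the statement needs strengthening rather than that convexity can be derived. With that hypothesis added, the remainder of your argument (transfer, maximum principle, descent of the fixed point) is exactly the paper's proof and is correct.
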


We can consider the names $\tilde{X}$, $\hat{C}$ and $\hat{f}$ as described above.  
If $T$ denotes the statement of the Schauder-Tychonov Theorem, then the statement above, let us say $\textbf{T}$, is nothing else but a reformulation of the statement $\Vert T\Vert=1$ with the Boolean truth value of $\VA$. 
By the transfer principle of Boolean-valued models, one has that $\Vert T\Vert=1$ holds, thus  $\textbf{T}$ is also a theorem.
 
Of course, this is just an example. 
In general, this method can be systematically applied to the different theorems of \cite{DJKK13}.\\

\end{document}